\begin{document}
\title{On the realization of a class of $\text{SL}(2,\mathbb{Z})$-representations}
\author{Zhiqiang Yu}
\date{}
\maketitle

\newtheorem{theo}{Theorem}[section]
\newtheorem{prop}[theo]{Proposition}
\newtheorem{lemm}[theo]{Lemma}
\newtheorem{coro}[theo]{Corollary}
\theoremstyle{definition}
\newtheorem{defi}[theo]{Definition}
\newtheorem{exam}[theo]{Example}
\newtheorem{conj}[theo]{Conjecture}
\newtheorem{rema}[theo]{Remark}
\newtheorem{ques}[theo]{Question}

\newcommand{\A }{\mathcal{A}}
\newcommand{\C }{\mathcal{C}}
\newcommand{\B }{\mathcal{B}}
\newcommand{\D }{\mathcal{D}}
\newcommand{\E }{\mathcal{E}}
\newcommand{\FPdim}{\text{FPdim}}
\newcommand{\FQ }{\mathbb{Q}}
\newcommand{\FC }{\mathbb{C}}
\newcommand{\Gal}{\text{Gal}}
\newcommand{\Hom}{\text{Hom}}
\newcommand{\K}{\mathds{k}}
\newcommand{\I }{\mathcal{I}}
\newcommand{\M }{\mathcal{M}}
\newcommand{\Q }{\mathcal{O}}
\newcommand{\rank}{\text{rank}}
\newcommand{\hxs}{\mathfrak{s}}
\newcommand{\hxt}{\mathfrak{t}}
\newcommand{\Rep}{\text{Rep}}
\newcommand{\SL}{\text{SL}}
\newcommand{\ssl}{\mathfrak{sl}}
\newcommand{\vvec}{\text{Vec}}
\newcommand{\W}{\mathcal{W}}
\newcommand{\Y }{\mathcal{Z}}
\newcommand{\Z }{\mathbb{Z}}

\abstract
Let $p<q$ be  odd  primes, $\rho_1$ and $\rho_2$ be irreducible representations of $\SL(2,\Z_p)$ and $\SL(2,\Z_q)$ of dimensions $\frac{p+1}{2}$ and $\frac{q+1}{2}$, respectively. We show that if $\rho_1\oplus\rho_2$ can be realized as modular representation associated to a  modular fusion category $\C$, then $q-p=4$. Moreover, if $\C$ contains a non-trivial \'{e}tale algebra, then  $\C\boxtimes\C(\Z_p,\eta)\cong\Y(\A)$ as braided fusion category, where $\A$ is a near-group fusion category of type $(\Z_p,p)$, which gives a partial answer to the conjecture of D. Evans and T. Gannon. And we show that there exists a non-trivial $\Z_2$-extension of $\A$ that contains  simple objects of Frobenius-Perron dimension $\frac{\sqrt{p}+\sqrt{q}}{2}$.

\bigskip
\noindent {\bf Keywords:} Modular fusion category; modular representation; near-group fusion category

Mathematics Subject Classification 2020: 18M20
\section{Introduction}

A braided spherical fusion category $\C$ is called modular if the $S$-matrix of $\C$ is non-degenerate (see Section \ref{preliminaries}).  Modular fusion category connects  with conformal field theory, quantum groups, representation theory, and mathematical physics, etc \cite{DongLNg,EGNO,KiO,Mu}. Combined with $T$-matrix, which is defined by the ribbon structure $\theta$ of $\C$, these two matrices $(S,T)$ are called the modular data of $\C$. The modular data enjoy many important algebraic and  arithmetic properties. As it was called, the modular data  provides a projective congruence representation $\rho$ of the modular group $\SL(2,\Z)$ of level $N$ \cite{DongLNg,EGNO,NRWW}, where $N=\text{ord}(T)$. Moreover, $\rho$ can be lifted to a linear congruence representation of $\SL(2,\Z)$ of level $n$ with $N\mid n\mid 12N$, that is, it factors through $\SL(2,\Z)\to \SL(2,\Z_n)$, and the linear representation satisfies the  Galois symmetry \cite{DongLNg}.

Finite-dimensional representations of $\SL(2,\Z_n)$ are classified completely in  \cite{Nob,NobWol}. Thus, one could construct (or, reconstruct) modular fusion categories from finite-dimensional congruence representations of $\SL(2,\Z)$, see  \cite{NRWW,NgWZ,Yu2} for applications. In this paper, we are aimed to realize a class of finite-dimensional congruence representations of $\SL(2,\Z)$ as modular representation associated to a modular fusion category. Explicitly, let $p$ be an odd prime, and let $\rho$ be an irreducible $\frac{p+1}{2}$-dimensional representation of $\SL(2,\Z_p)$. Up to isomorphism, it is well-known that there exist just two such representation \cite{Nob}. However, neither of these two representations can be isomorphism to a modular representation associated to a modular fusion category \cite{Eholzer}.  Hence we consider the following question:
\begin{ques}
Let $p<q$ be  odd primes. Is there a modular fusion category $\C$ such that the associated modular representation $\rho_\C\cong\rho_1\oplus\rho_2$, where $\rho_1,\rho_2$ are irreducible representations of dimensional $\frac{p+1}{2}$ and $\frac{q+1}{2}$, respectively?
\end{ques}
When $p=3$ and $q=7$, the answer is positive \cite[Lemma 4.7]{NRWW}. We give a necessary condition on realizing the sum $\rho_1\oplus\rho_2$ in  Theorem \ref{q-p=4}, which states $q-p=4$. Moreover, we show that if such  a modular fusion category $\C$ does exist, then it is connected with a  near-group fusion category $\A$ (see Subsection \ref{subsection3.2}). And we study the structure of $\C$ and the related near-group fusion category $\A$, we  also give a faithful $\Z_2$-extension of $\A$, which generalizes the fusion category $\mathcal{V}$ constructed  by Ostrik in \cite{CMS}.

Since there exists a pointed modular fusion category $\C(\Z_p,\eta)$ of Frobenius-Perron dimension $p$ such that  $\C\boxtimes\C(\Z_p,\eta)\cong\Y(\A)$ as modular fusion category (Theorem \ref{etaleAlg}), which  then can be viewed as evidence that \cite[Conjecture 2]{EG} might be true; and the modular data (of $\C$) obtained in this paper gives a partial solution to the modular data described  with unknown parameters in \cite[Proposition 7]{EG}.

This paper is organized as follows. In Section \ref{preliminaries}, we recall some basic notions and notations of  (modular)  fusion categories, such as  Frobenius-Perron dimension, global dimension, modular data, and the congruence representations of the modular group $\SL(2,\Z)$.  In Section \ref{realization}, we consider the realization of direct sum $\rho_1\oplus\rho_2$ of two irreducible representations of dimension $\frac{p+1}{2}$ and $\frac{q+1}{2}$, respectively. We show in Theorem \ref{q-p=4} that if $\rho_1\oplus\rho_2$ can be realized as representation associated to modular fusion category $\C$ then $q-p=4$. Under the assumption that $\C$ contains a  non-trivial connected \'{e}tale algebra $A$, we prove that $\C_A^0$ is a pointed modular fusion category and $\C_A$ is a near-group fusion category of type $(\Z_p,p)$ in Theorem \ref{etaleAlg} and Theorem \ref{structC_A}. In the last, we construct a faithful $\Z_2$-extension $\M$ of $\C_A$, which contains simple objects of Frobenius-Perron dimension $\frac{\sqrt{p}+\sqrt{q}}{2}$,  and we determine the fusion relations of $\M$ in Corollary \ref{frelation}.
\section{Preliminaries}\label{preliminaries}
In this section,  we will recall some most used  definitions and properties about  modular fusion categories, we refer the readers to \cite{DrGNO,EGNO,ENO,ENO2,Mu} to standard conclusions for  fusion categories and braided fusion categories.
\subsection{Fusion category}
A  $\FC$-linear abelian category $\C$ over the complex number field $\FC$ is called a fusion category if $\C$ is a finite semisimple tensor category \cite{EGNO}. In the following, we use $\Q(\C)$ and $\otimes$ to denote the set of isomorphism classes of simple objects of $\C$ and the monoidal functor on $\C$, respectively.

Let $\C$ be a fusion category. Then its Grothendieck ring is a fusion ring with $\Z_+$-basis $\Q(\C)$ and the multiplication is induced by the monoidal functor $\otimes$. There is a unique homomorphism $\FPdim(-)$, called the Frobenius-Perron homomorphism,  from $\text{Gr}(\C)$ to $\FC$ such that $\FPdim(X)$ is a positive algebraic integer for all non-zero object $X$ \cite{EGNO,ENO}. The sum
\begin{align*}
\FPdim(\C):=\sum_{X\in\Q(\C)}\FPdim(X)^2
\end{align*}
is called the Frobenius-Perron dimension of $\C$.

A fusion category $\C$ is pivotal if it admits a pivotal structure $j$, which is a natural isomorphism from the identity functor $\text{id}$ to the double dual functor $(-)^{**}$ \cite{EGNO}. Then there is a well-defined categorical trace $\text{Tr}(-)$ for all morphism $f\in\Hom_\C(X,X)$, where $X$ is an object of $\C$. Fix a spherical structure $j$ on $\C$, the categorical trace of $\text{id}_X$ is called the categorical dimension of $X$ and denoted by $\dim(X)$, and the sum
\begin{align*}
\dim(\C):=\sum_{X\in\Q(\C)}\dim(X)\dim(X^*)
\end{align*}
is called the global (or quantum) dimension of $\C$. Moreover, the categorical dimension induces a homomorphism from the  Grothendieck ring $\text{Gr}(\C)$ to $\FC$ \cite[Proposition 4.7.12]{EGNO}. If $\dim(X)=\dim(X^*)$ for all objects $X$ of $\C$, then $\C$ is called spherical.

Recall that a fusion ring $R$ is  categorifiable if there exists a fusion category $\C$ such that   $\text{Gr}(\C)=R$ as fusion ring \cite[Definition 4.10.1]{EGNO}, and $\C$ is called a categorification of $R$. For example, for any finite group $G$, pointed fusion category $\vvec_G^\omega$, i.e., the category of  $G$-graded finite-dimensional vector space over $\FC$, is a categorification of the group ring $\Z[G]$, where $\omega\in Z^3(G,\FC^*)$ is a normalized $3$-cocycle on $G$ and $\FC^*:=\FC\backslash\{0\}$.
\subsection{Modular fusion category and modular representation}
A braided fusion category $\C$ is a fusion category with a braiding $c$, which is a natural isomorphism $c_{X,Y}:X\otimes Y\overset{\sim}{\to} Y\otimes X$ satisfying the hexagon equations \cite{EGNO}. In addition, if $\C$ is spherical, then $\C$ is called a pre-modular (or ribbon) fusion category, we use $\theta$ to denote the ribbon structure of $\C$.

Let $\C$ be a pre-modular fusion category. For any simple objects $X,Y$ of $\C$, let $S_{X,Y}:=\text{Tr}(c_{Y,X}c_{X,Y})$, then
\begin{align*}
S=(S_{X,Y}),\quad T=(\delta_{X,Y}\theta_X)
\end{align*}
is called the modular data of $\C$. If the $S$-matrix $S$ is non-degenerate, then $\C$ is said to be a modular fusion category \cite{DrGNO,Mu}. For example, pointed modular fusion categories are in bijective correspondence with metric groups \cite[Proposition 2.41]{DrGNO}. We use $\C(G,\eta)$ to denote the modular fusion category determined by metric group $(G,\eta)$, where $G$ is a finite abelian group and $\eta:G\to\FC^*$ is a non-degenerate quadratic form, the modular data of $\C(G,\eta)$ is
\begin{align*}
S_{g,h}=\frac{\eta(gh)}{\eta(g)\eta(h)}, \theta_g=\eta(g),\forall g,h\in G.
\end{align*}
The $S$-matrix of a modular fusion category $\C$ also satisfies  the Verlinde formula \cite{EGNO}, which states that for any objects $X,Y,Z\in\Q(\C)$,
\begin{align*}\label{Verlinde}
N_{X,Y}^Z:=\dim_\FC(\text{Hom}_\C(X\otimes Y,Z))=\frac{1}{\dim(\C)}\sum_{W\in\Q(\C)}\frac{S_{X,W}S_{Y,W}S_{Z^*,W}}{\dim(W)}.
\end{align*}

Recall that  the modular group $\SL(2,\Z)$  is generated by $\hxs=\left(
                                                      \begin{array}{cc}
                                                        0 & 1 \\
                                                        -1 & 0 \\
                                                      \end{array}
                                                    \right)
$ and $\hxt=\left(
              \begin{array}{cc}
                1 & 1 \\
                0 & 1 \\
              \end{array}
            \right)
$ with relations $\hxs^4=1$ and $(\hxs\hxt)^3=\hxs^2$. The modular data of a modular fusion category $\C$ determines a projective congruence representation $\rho$ of the modular group $\SL(2,\Z)$ of level $N=\text{ord}(T)$ \cite{BNRW,DongLNg,EGNO,NRWW}, that is,  $\ker(\rho)$ kills a congruence subgroup of level $N$, and
 \begin{align*}
\rho:~\hxs\mapsto\frac{1}{\sqrt{\dim(\C)}}S, \hxt\mapsto T,
\end{align*} where
$\sqrt{\dim(\C)}$ is the positive square root of $\dim(\C)$.
Moreover, the projective representation $\rho$  can be lifted to a linear congruence representation $\rho_\C$ of level $n$ and $N\mid n$ by \cite[Theorem II]{DongLNg}, where $n=\text{ord}(\rho_\C(\hxt))$. If $\text{ord}(T)$ is odd, then there is a lifting $\rho'$ of $\rho$ such that $\text{ord}(\rho'(\hxt))=\text{ord}(T)$ \cite[Lemma 2.2]{DongLNg}.

Let $\rho$ be an arbitrary irreducible finite-dimensional congruence representation of $\SL(2,\Z)$ of level $n$, where $n$ is a positive integer. Then  it follows from the Chinese Reminder Theorem that $\rho$ factors through the finite groups
 \begin{align*}
\SL(2,\Z_n)\cong\SL(2,\Z_{p_1^{n_1}})\times\cdots\times\SL(2,\Z_{p_r^{n_r}})
\end{align*} and
$\rho\cong \otimes_{j=1}^r\rho_{p_j}$, where $n=\prod_{j=1}^rp_j^{n_j}$ and $p_j$ are distinct primes, $\rho_{p_j}$ are finite-dimensional representations of subgroups $\SL(2,\Z_{p_j^{n_j}})$. And finite-dimensional irreducible representations of the group $\SL(2,\Z_{p^m})$ are completely classified and constructed explicitly in \cite{Nob,NobWol}.

Hence, one could try to reconstruct of modular fusion categories from finite-dimensional congruence representations of $\SL(2,\Z)$, see  \cite{BNRW,Eholzer,NRWW,NgWZ,Yu2} and the references therein for details. For example, many important properties of modular representations are summarized and characterized in \cite{NRWW}; as an application, modular fusion categories with six simple objects (up to isomorphism) are classified by considering the type of the associated modular representation of $\C$ \cite{NRWW}. A  representation $\rho$ of $\SL(2,\Z)$ is called   realizable if there exists a modular fusion category $\C$ such that $\rho_\C\cong\rho$.
\section{Realization and extension}\label{realization}
In this section, we consider the realization of $\rho_1\oplus\rho_2$ as modular representation associated to a modular fusion category. Under  the assumption that $\rho_1\oplus\rho_2$ can realized as representation of modular fusion category $\C$, we study the structure of $\C$ and show it is related to certain near-group fusion category $\A$. In the last, we construct a faithful $\Z_2$-extension of  $\A$.
\subsection{Realization}
Let $p$ be an odd prime. Let $\rho$ be a $\frac{p+1}{2}$-dimensional irreducible representation of $\SL(2,\Z_p)$. Then \cite[Equation (4.11)]{Eholzer} says
\begin{align*}
\rho(\hxs)=\beta_p\left(
                             \begin{array}{cccc}
                               1 & \sqrt{2} & \cdots & \sqrt{2} \\
                               \sqrt{2} &  \\
                               \vdots & &2\cos(\frac{4\pi ajk}{p})   \\
                               \sqrt{2} &  \\
                             \end{array}
                           \right)=\left(
                                     \begin{array}{cc}
                                       \beta_p & B^T \\
                                       B & D \\
                                     \end{array}
                                   \right),
\rho(\hxt)=(1,T_1),
\end{align*}
where $B^T:=(\sqrt{2}\beta_p,\cdots,\sqrt{2}\beta_p)$ is a $\frac{p-1}{2}$-dimensional vector over $\FC$, $D:=\left(2\beta_p\cos(\frac{4\pi ajk}{p})\right)$ and $T_1:=\text{diag}\left(\zeta_p^a,\cdots,\zeta_p^{a\cdot\left(\frac{p-1}{2}\right)^2}\right)$ are square-matrix of order $\frac{p-1}{2}$, $1\leq j,k\leq \frac{p-1
}{2}$, $\beta_p:=\left(\frac{a}{p}\right)\sqrt{\left(\frac{-1}{p}\right)\frac{1}{p}}$, where $a$ is an integer coprime to $p$ and $\left(\frac{a}{p}\right)$ is the classical Legendre symbol. Notice that $\rho$ is non-degenerate, i.e., the eigenvalues of $\rho(\hxt)$ are multiplicity-free. Given an odd prime $p$, up to isomorphism, it is well-known that there are exactly two such irreducible representations \cite{Nob}, depending on the value $\left(\frac{a}{p}\right)$.

  It was proved  in   \cite{Eholzer} that $\rho$ can't be realized by rational conformal field theory (equivalently, it can't be realized  as modular representation associated  a modular fusion category), as the corresponding fusion rings obtained from the Verlinde formula are not integer valued. However, it was also noted in \cite{Eholzer} that one can obtain an integer valued fusion ring from a direct sum of two such representations for different primes $p,q$ such that $q-p=4$.

  Hence one would like to answer the following question naturally.
\begin{ques}Let $p< q$ be  odd primes. And let $\rho_1$ and $\rho_2$ be irreducible representations of  $\SL(2,\Z_p)$ and $\SL(2,\Z_q)$  such that  $\dim(\rho_1)=\frac{p+1}{2}$ and $\dim(\rho_2)=\frac{q+1}{2}$, respectively. Is $\rho_1\oplus\rho_2$ realizable?
\end{ques}
When $p=3$ and $q=7$, the answer is positive; and $\C$ is a Galois conjugate of modular fusion category $\C(\mathfrak{g}_2,3)$ \cite[Lemma 4.7]{NRWW}. We refer the readers to \cite{BK}  for constructions of modular fusion category $\C(\mathfrak{g},k)$, where $\mathfrak{g}$ is a simple Lie algebra. Notice that if $p=1$ (of course, it is not a prime), and let $\rho_0$ be the trivial representation, then $\rho_0\oplus\rho_2$ is realizable for all primes $q\geq5$, moreover,  the associated modular fusion category $\C$ is Grothendieck equivalent to $\C(\ssl_2,2(q-1))_A^0$ \cite[Theorem 3.12]{Yu2}, where $A$ is the non-trivial \'{e}tale algebra of $\Rep(\Z_2)\subseteq\C(\ssl_2,2(q-1))$ and $\C(\ssl_2,2(q-1))_A^0$ is the core of $\C(\ssl_2,2(q-1))$, see \cite{DMNO,DrGNO,KiO} for details.

In the following theorem, we give a necessary condition to realize $\rho_1\oplus\rho_2$ as modular representation associated to a modular fusion category.

\begin{theo}\label{q-p=4}If there is a  modular fusion category $\C$ such that $\rho_\C\cong\rho_1\oplus\rho_2$, then  $q-p=4$.
\end{theo}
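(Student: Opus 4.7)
The plan is to exploit the block structure imposed by $\rho_\C\cong\rho_1\oplus\rho_2$ to write down almost all entries of the modular $S$-matrix of $\C$ in closed form, and then to impose integrality of certain Verlinde fusion coefficients together with algebraic-integrality of the categorical dimension of a distinguished simple.

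First I would analyze the $\hxt$-eigenspaces: since the eigenvalues of $\rho_1(\hxt)$ are $p$th roots of unity and those of $\rho_2(\hxt)$ are $q$th roots of unity with $\gcd(p,q)=1$, the joint eigenvalue $1$ has multiplicity exactly $2$ and every non-trivial eigenvalue has multiplicity $1$. Hence $\C$ has exactly two simples of trivial twist ($\mathbf{1}$ and some $\mathbf{1}'$), and every other simple corresponds, under the isomorphism $P:V_\C\to V_1\oplus V_2$, to a scalar multiple of a specific basis vector $e_j^{(1)}$ or $e_k^{(2)}$. Parametrizing $P$ accordingly ($P\mathbf{1}=\alpha e_0^{(1)}+\beta e_0^{(2)}$, $P\mathbf{1}'=\gamma e_0^{(1)}+\delta e_0^{(2)}$, $PX_j=\lambda_je_j^{(1)}$, $PY_k=\mu_ke_k^{(2)}$) and matching the matrix coefficients of $\rho_\C(\hxs)=P^{-1}(\rho_1(\hxs)\oplus\rho_2(\hxs))P$ with the normalized $S$-matrix of $\C$ yields: all $\dim(X_j)$ coincide in a single value $x$ (from $\lambda_j^2=\alpha^2+\gamma^2$, independent of $j$), all $\dim(Y_k)=y$; $S_{X_j,Y_k}=0$; and $S_{\mathbf{1}',X_j}=\eta_p\dim(X_j)$ with $\eta_p:=(\beta_pD-1)/d$, where $D:=\sqrt{\dim(\C)}$ and $d:=\dim(\mathbf{1}')$, and symmetrically $\eta_q$. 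Matching $\rho_\C(\hxs^2)$ --- which equals the charge-conjugation permutation matrix $C$ up to a $4$th-root-of-unity sign from the lift of the projective modular representation --- with $\epsilon_pI\oplus\epsilon_qI$ forces $\epsilon_p=\epsilon_q$, so $p\equiv q\pmod 4$, and $C=I$, so every simple of $\C$ is self-dual.

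Next I would extract $\eta_p,\eta_q\in\Z$ from two Verlinde computations. From $N_{X_j,Y_k}^{\mathbf{1}}=0$ (since $X_j\ne Y_k^*$) together with $S_{X_j,Y_k}=0$, only the $\mathbf{1}$- and $\mathbf{1}'$-terms in the Verlinde sum survive, giving $\eta_p\eta_q=-1$. Computing instead $N_{\mathbf{1}',X_j}^{X_j}-N_{\mathbf{1}',X_j}^{X_{j'}}$ for distinct $j,j'$, and using the elementary identity $\sum_{j''=1}^{(p-1)/2}\cos(4\pi amj''/p)=-\tfrac12$ (valid whenever $p\nmid m$) to evaluate the remaining character sums, the difference telescopes to exactly $\eta_p$; hence $\eta_p\in\Z$, and by symmetry $\eta_q\in\Z$. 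Together $\{\eta_p,\eta_q\}=\{+1,-1\}$, and back-substitution into the defining relations gives $D=\sqrt{p}(1+d)=\sqrt{q}(d-1)$, so $d=(\sqrt{p}+\sqrt{q})^2/(q-p)=(p+q+2\sqrt{pq})/(q-p)$.

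Finally, $d$ is a categorical dimension in a fusion category, hence an algebraic integer; since $p\equiv q\pmod 4$ gives $pq\equiv 1\pmod 4$, the ring of integers of $\FQ(\sqrt{pq})$ is $\Z\bigl[(1+\sqrt{pq})/2\bigr]$, and writing $d=(A+B\sqrt{pq})/2$ forces $A=2(p+q)/(q-p)$ and $B=4/(q-p)$ to be integers of equal parity. Since $\gcd(q-p,p)=1$, integrality of $B$ alone already forces $(q-p)\mid 4$, so $q-p\in\{2,4\}$; and $q-p=2$ is incompatible with $p\equiv q\pmod 4$, so $q-p=4$. The main obstacle is the case analysis of the first step together with the trigonometric-sum bookkeeping in the Verlinde calculation: particular care is needed in tracking the sign from the lift of the projective modular representation so that both $p\equiv q\equiv 1\pmod 4$ and $p\equiv q\equiv 3\pmod 4$ (the latter exemplified by $(p,q)=(3,7)$ coming from a Galois conjugate of $\C(\mathfrak{g}_2,3)$) are handled uniformly.
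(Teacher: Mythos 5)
Your proposal reaches the correct conclusion and is, in outline, a valid proof, but it travels a genuinely different road from the paper. The paper imports the block form of $\rho_\C(\hxs)$ wholesale from \cite[Theorem 3.23]{NRWW} (in particular the fact that the $2\times2$ mixing matrix on the trivial-twist eigenspace is a $45^\circ$ rotation up to signs), reads off $\dim(\C)=4/|\beta_p+\beta_q|^2=pq\cdot\frac{(p+q)/2\pm\sqrt{pq}}{2}$, and then applies the Cauchy theorem \cite[Theorem 3.9]{BNRW} twice: once to force $\overline{\mu}_p\mu_q=\pm1$, and once to force the factor $\frac{(p+q)/2\pm\sqrt{pq}}{2}$, whose norm is $(q-p)^2/16$, to be a unit, giving $(q-p)^2=16$ in one stroke. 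You instead rebuild the structure from the intertwiner by hand, get $p\equiv q\pmod 4$ and self-duality from $\rho_\C(\hxs)^2=\pm C$, pin down the one free mixing parameter by Verlinde integrality, and replace Cauchy's theorem by the bare fact that $\dim(\mathbf 1')=(p+q+2\sqrt{pq})/(q-p)$ is an algebraic integer, so $(q-p)\mid 4$; the congruence then kills $q-p=2$. Your route is more elementary and self-contained (no Cauchy, no NRWW structure theorem) and yields the congruence and self-duality as byproducts, at the cost of more bookkeeping.

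Two soft spots, both repairable. First, the key telescoping identity is stated with the wrong logical order: writing $P\mathbf 1=\alpha e_0^{(1)}+\beta e_0^{(2)}$ with $\alpha^2+\beta^2=1$, the difference $N_{\mathbf 1',X_j}^{X_j}-N_{\mathbf 1',X_j}^{X_{j'}}$ evaluates to $\pm2\alpha\beta=\pm\frac{p\,\eta_p}{p-1+\eta_p^2}$, \emph{not} to $\eta_p$, until one already knows $|\alpha|=|\beta|=1/\sqrt2$. The correct conclusion is that this quantity is a nonzero integer of absolute value $2|\alpha\beta|\le\alpha^2+\beta^2=1$, hence equals $\pm1$; only this forces the $45^\circ$ mixing and therewith $\eta_p=\pm1$. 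As literally written ("telescopes to exactly $\eta_p$, hence $\eta_p\in\Z$") the step is circular. Second, the telescoping needs two distinct simples $X_j, X_{j'}$ \emph{with the same sign of categorical dimension} in the $p$-block: the signs $\lambda_j=\pm1$ have not been fixed at that stage (only $\lambda_j^2$ is determined), and for $p=3$ there is only one $X_j$ at all, so one must run the argument in the $q$-block (or first fix the signs via positivity of Frobenius--Perron dimensions, as the paper does). Neither issue changes the endpoint, but both need to be addressed for the proof to close.
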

\begin{proof}
It follows from \cite[Theorem 3.23]{NRWW} that
\begin{align*}
\rho_\C(\hxs)=V\left(
         \begin{array}{ccc}
           A & C_1^T & C_2^T \\
           C_1 & D_1 & \textbf{0} \\
           C_2 & \textbf{0} & D_2 \\
         \end{array}
       \right)
V,~ \rho_\C(\hxt)=\left(
                \begin{array}{ccc}
                  E_2 &  &  \\
                   & T_1 &  \\
                   &  & T_2 \\
                \end{array}
              \right)
,
\end{align*}
where $V$ is a signed diagonal orthogonal matrix, $T_1=\text{diag}\left(\zeta_p^{a_1},\cdots,\zeta_p^{a_1\cdot\left(\frac{p-1}{2}\right)^2}\right)$ and $T_2=\text{diag}\left(\zeta_q^{a_2},\cdots,
\zeta_q^{a_2\cdot\left(\frac{q-1}{2}\right)^2}\right)$, and
\begin{align*}
A=U\left(
                                             \begin{array}{cc}
                                               \beta_p &  \\
                                                & \beta_q \\
                                             \end{array}
                                           \right)U^T
=\frac{1}{2}\left(
                                             \begin{array}{cc}
                                               \beta_p+\beta_q &\nu(\beta_p-\beta_q)  \\
                                               \nu(\beta_p-\beta_q) & \beta_p+\beta_q \\
                                             \end{array}
                                           \right)
                                           \end{align*}with $U=\left(
            \begin{array}{cc}
              \frac{1}{\sqrt{2}} &\frac{-\nu}{\sqrt{2}} \\
\frac{\nu}{\sqrt{2}} & \frac{1}{\sqrt{2}} \\
            \end{array}
          \right)
$ and $\nu^2=1$,
\begin{align*}
C_1=(B_1,0)U^T=\beta_p\left(
                                  \begin{array}{cc}
                                    1 & \nu \\
                                    \vdots & \vdots \\
                                    1 & \nu \\
                                  \end{array}
                                \right)
,C_2=(0,B_2)U^T=\beta_q\left(
                                  \begin{array}{cc}
                                    -\nu & 1 \\
                                    \vdots & 1 \\
                                    -\nu & 1 \\
                                  \end{array}
                                \right)
.
\end{align*}

 Let $V=\text{diag}(1,\epsilon_1,\cdots,\epsilon_{\frac{p+q}{2}})$ where $\epsilon_j\in\{\pm1\}$ for all $1\leq j\leq \frac{p+q}{2}$, hence we see
\begin{align*}
\rho_\C(\hxs)&=V\left(
         \begin{array}{ccc}
           A & C_1^T & C_2^T \\
           C_1 & D_1 & \textbf{0} \\
           C_2 & \textbf{0} & D_2 \\
         \end{array}
       \right)
V\\&=V\left(
\begin{array}{cccccccc}
\frac{1}{2}(\beta_p+\beta_q) & \frac{\nu}{2}(\beta_p-\beta_q) & \beta_p & \cdots & \beta_p&-\nu\beta_q&\cdots& -\nu\beta_q\\
\frac{\nu}{2}(\beta_p-\beta_q) &  \frac{1}{2}(\beta_p+\beta_q) & \beta_p\nu
& \cdots  & \beta_p\nu &\beta_q&\cdots&\beta_q\\
\beta_p &\beta_p\nu  &  &&   \\
\vdots& \vdots &  &  &  \\
\beta_p &\beta_p\nu  & & &D_1 & & \textbf{0} \\
-\nu\beta_q&\beta_q&&&\\
\vdots&\vdots&&&\textbf{0}&&D_2\\
 -\nu\beta_q&\beta_q&&&&&\\
\end{array}
\right)V\end{align*}
\begin{align*}
=\left(
\begin{array}{cccccccc}
\frac{1}{2}(\beta_p+\beta_q) & \frac{\nu\epsilon_1}{2}(\beta_p-\beta_q) & \epsilon_2\beta_p & \cdots & \epsilon_\frac{p+1}{2}\beta_p
&-\epsilon_\frac{p+3}{2}\nu\beta_q&\cdots& -\epsilon_\frac{p+q}{2}\nu\beta_q\\
\frac{\nu\epsilon_1}{2}(\beta_p-\beta_q) &  \frac{1}{2}(\beta_p+\beta_q) & \epsilon_1\epsilon_2\beta_p\nu
& \cdots  & \epsilon_1\epsilon_\frac{p+1}{2}\beta_p\nu &\epsilon_1\epsilon_\frac{p+3}{2}\beta_q&
\cdots&\epsilon_1\epsilon_\frac{p+q}{2}\beta_q\\
\epsilon_2\beta_p &\epsilon_1\epsilon_2\beta_p\nu  &  &&   \\
\vdots& \vdots &  &  &  \\
\epsilon_\frac{p+1}{2}\beta_p&\epsilon_1\epsilon_\frac{p+1}{2}\beta_p\nu  && & &V_1D_1V_1 & & \textbf{0} \\
-\epsilon_\frac{p+3}{2}\nu\beta_q&\epsilon_1\epsilon_\frac{p+3}{2}\beta_q&&&\\
\vdots&\vdots&&&&\textbf{0}&&V_2D_2V_2\\
 -\epsilon_\frac{p+q}{2}\nu\beta_q&\epsilon_1\epsilon_\frac{p+q}{2}\beta_q&&&&&\\
\end{array}
\right),
\end{align*}
where $V=\left(
           \begin{array}{cccc}
             1 & &  &  \\
              & \epsilon_1 &  &\\
              &  & V_1 &\\
              &&& V_2
           \end{array}
         \right),
$ $V_1=\left(\begin{array}{cccc}
\epsilon_2&&\\
&\ddots\\
&&\epsilon_\frac{p+1}{2}
\end{array}\right)$,
 $V_2=\left(\begin{array}{cccc}
\epsilon_\frac{p+3}{2}&&\\
&\ddots\\
&&\epsilon_\frac{p+q}{2}
\end{array}\right)$.

Since the categorical dimensions of simple objects are always non-zero,  either the elements in the first or  the second row are are dimensions (multiplied with a non-zero scalar necessarily) of simple objects of $\C$,  depending on which vector represents the unit object.
 We know $\beta_p=\frac{\mu_p}{\sqrt{p}}$ and $\beta_q=\frac{\mu_q}{\sqrt{q}}$, where $\mu_p=\left(\frac{a_1}{p}\right)\sqrt{\left(\frac{-1}{p}\right)}$ and $\mu_q=\left(\frac{a_2}{q}\right)\sqrt{\left(\frac{-1}{q}\right)}$ are $4$-th roots of unity. A  classical theorem of Legendre symbol says  $\left(\frac{a_1}{p}\right)\equiv a_1^\frac{p-1}{2}  \text{mod}~p$, so
 \begin{align*}
 \mu_p=\left(\frac{a_1}{p}\right)\sqrt{\left(\frac{-1}{p}\right)}
=\left\{
\begin{array}{ll}
   \left(\frac{a_1}{p}\right), & \hbox{if $p=4k+1$;} \\
     \left(\frac{a_1}{p}\right)\zeta_4, & \hbox{if $p=4k+3$.}
\end{array} \right.
.\end{align*}
where $\zeta_4$ is a $4$-th primitive root of unity. Notice that
\begin{align*}
|\beta_p+\beta_q|^2
=\frac{(\mu_p\sqrt{q}+\mu_q\sqrt{p})
(\overline{\mu}_p\sqrt{q}+\overline{\mu}_q\sqrt{p})}{pq}
=\frac{(p+q)+(\overline{\mu}_p\mu_q+\overline{\mu}_q\mu_p)\sqrt{pq}}{pq}.
\end{align*}
We claim  $\overline{\mu}_p\mu_q+\overline{\mu}_q\mu_p=2\text{Re}(\overline{\mu}_p\mu_q)=\pm2$. In fact, $\overline{\mu}_p\mu_q+\overline{\mu}_q\mu_p\neq0$, otherwise
\begin{align*}
\dim(\C)=\frac{4}{|\beta_p+\beta_q|^2}=\frac{4pq}{p+q},
\end{align*}
then $p+q$ must contains a prime factor which is coprime to $pq$. However, $\text{ord}(\rho_\C(\hxt))=pq$, it violates the Cauchy's theorem of spherical fusion categories \cite[Theorem 3.9]{BNRW}. Meanwhile, $\overline{\mu}_p\mu_q$ is a $4$-th root of unit, so $2\text{Re}(\overline{\mu}_p\mu_q)=\pm2$,  as claimed. Therefore, \begin{align*}
\dim(\C)=\frac{4}{|\beta_p+\beta_q|^2}=pq\frac{\frac{p+q}{2}\pm\sqrt{pq}}{2},
\end{align*}
 depending on the value of $\text{Re}(\overline{\mu}_p\mu_q)$.
Then \begin{align*}
N\left(\dim(\C)\right)=p^2q^2N\left(\frac{\frac{p+q}{2}\pm\sqrt{pq}}{2}\right)=
p^2q^2\frac{(p-q)^2}{16},
\end{align*}
where $N(\dim(\C))$ and $N\left(\frac{\frac{p+q}{2}\pm\sqrt{pq}}{2}\right)$ are the norms of $\dim(\C)$ and $\frac{\frac{p+q}{2}\pm\sqrt{pq}}{2}$ over $\FQ(\sqrt{pq})$, respectively. Again the Cauchy's theorem of spherical fusion categories \cite[Theorem 3.9]{BNRW}  implies that $\frac{\frac{p+q}{2}\pm\sqrt{pq}}{2}$ must be an algebraic unit in $\FQ(\sqrt{pq})$, that is, $q-p=4$, as desired.
\end{proof}

Below we calculate the dimensions of simple objects of $\C$, denote by $\varepsilon_{pq}:=\frac{\sqrt{p}+\sqrt{q}}{2}$, then $\dim(\C)=pq\varepsilon_{pq}^{\pm2}$.
Since $q-p=4$, we have $\overline\mu_p\mu_q=\left(\frac{a_1}{p}\right)\left(\frac{a_2}{q}\right)=\pm1$. That is, if $a_1$ and $a_2$ are both square residues  or both non-square residues modulo $p$ and $q$, respectively, then $\dim(\C)=pq\varepsilon_{pq}^{-2}$, otherwise, $\dim(\C)=pq\varepsilon_{pq}^2$.

We list the categorical dimensions in both cases explicitly. After  identifying $\Q(\C)$ with the standard basis $\{e_1,\cdots,e_{p+2}\}$ of vector space  $\FC^{p+2}$, the $S$-matrix of $\C$ can be written as
\begin{align*}
S=\left(
\begin{array}{cccccccc}
1& \frac{\nu\epsilon_1(\beta_p-\beta_q)}{\beta_p+\beta_q} & \frac{2\epsilon_2\beta_p}{\beta_p+\beta_q} & \cdots & \frac{2\epsilon_\frac{p+1}{2}\beta_p}{\beta_p+\beta_q}
&\frac{-2\epsilon_\frac{p+3}{2}\nu\beta_q}{\beta_p+\beta_q}&\cdots& \frac{-2\epsilon_\frac{p+q}{2}\nu\beta_q}{\beta_p+\beta_q}\\
\frac{\nu\epsilon_1(\beta_p-\beta_q)}{\beta_p+\beta_q} &  1 & \frac{2\epsilon_1\epsilon_2\beta_p\nu}{\beta_p+\beta_q}
& \cdots  & \frac{2\epsilon_1\epsilon_\frac{p+1}{2}\beta_p\nu}{\beta_p+\beta_q} &\frac{2\epsilon_1\epsilon_\frac{p+3}{2}\beta_q}{\beta_p+\beta_q}&
\cdots&\frac{2\epsilon_1\epsilon_\frac{p+q}{2}\beta_q}{\beta_p+\beta_q} \\
\frac{2\epsilon_2\beta_p}{\beta_p+\beta_q} &\frac{2\epsilon_1\epsilon_2\beta_p\nu}{\beta_p+\beta_q}  &  &&   \\
\vdots& \vdots &  &  &  \\
\frac{2\epsilon_\frac{p+1}{2}\beta_p}{\beta_p+\beta_q}&
\frac{2\epsilon_1\epsilon_\frac{p+1}{2}\beta_p\nu}{\beta_p+\beta_q}  & & &&\frac{2}{\beta_p+\beta_q}V_1D_1V_1  && \textbf{0} \\
\frac{-2\epsilon_\frac{p+3}{2}\nu\beta_q}{\beta_p+\beta_q}&
\frac{2\epsilon_1\epsilon_\frac{p+3}{2}\beta_q}{\beta_p+\beta_q}&&&\\
\vdots&\vdots&&&&\textbf{0}&&\frac{2}{\beta_p+\beta_q}V_2D_2V_2\\
 \frac{-2\epsilon_\frac{p+q}{2}\nu\beta_q}{\beta_p+\beta_q}&
 \frac{2\epsilon_1\epsilon_\frac{p+q}{2}\beta_q}{\beta_p+\beta_q}&&&&&\\
\end{array}
\right),
\end{align*}
\textbf{Case (1)}: $\overline{\mu}_p\mu_q=1$. We can assume that $a_1,a_2$ are both resides modulo $p,q$, respectively, the other case is same. Let $a_1=a_2=1$. Then $\beta_p=\frac{1}{\sqrt{p}}$ and $\beta_q=\frac{1}{\sqrt{q}}$ if $p=4k+1$, $\beta_p=\frac{\zeta_4}{\sqrt{p}}$ and $\beta_q=\frac{\zeta_4}{\sqrt{q}}$ if $p=4k+3$, then $\dim(\C)=pq\varepsilon_{pq}^{-2}$. Let \begin{align*}
&d_1:=\sqrt{q}\varepsilon_{pq}^{-1}=\frac{\sqrt{q}(\sqrt{q}-\sqrt{p})}{2},
~d_1':=d_1\varepsilon_{pq}^2=\sqrt{q}\varepsilon_{pq}=\frac{\sqrt{q}(\sqrt{q}+\sqrt{p})}{2}, \\&~d_2:=\sqrt{p}\varepsilon_{pq}^{-1}=\frac{\sqrt{p}(\sqrt{q}-\sqrt{p})}{2},~ d_2':=d_2\varepsilon_{pq}^2=\sqrt{p}\varepsilon_{pq}=\frac{\sqrt{p}(\sqrt{q}+\sqrt{p})}{2}.
\end{align*} Then the first row of the $S$-matrix  is
 \begin{align*}
 \left(1,\nu\epsilon_1\varepsilon_{pq}^{-2}, \epsilon_2d_1,\cdots,\epsilon_\frac{p+1}{2}d_1,
 -\nu\epsilon_\frac{p+3}{2}d_2,\cdots, -\nu\epsilon_\frac{p+q}{2}d_2\right),
 \end{align*}
and the second row of the  $S$-matrix is
\begin{align*}
 \left(\nu\epsilon_1\varepsilon_{pq}^{-2},1, \epsilon_1\epsilon_2\nu d_1,
 \cdots,\epsilon_1\epsilon_\frac{p+1}{2}\nu d_1,
 \epsilon_1\epsilon_\frac{p+3}{2}d_2,
 \cdots, \epsilon_1\epsilon_\frac{p+q}{2}d_2\right).
 \end{align*}

If the first row are the categorical dimensions of simple objects, that is, the first basis element $e_1$ is the unit object of $\C$, notice that  \begin{align*}
\dim(\C)<\sigma(\dim(\C))=pq\varepsilon_{pq}^2\leq\FPdim(\C),
 \end{align*}
 where $\langle\sigma\rangle=\Gal(\FQ(\sqrt{pq})/\FQ)$, then the second row must be the Frobenius-Perron dimensions of simple objects of $\C$ multiplied by scalar $\nu\epsilon_1\varepsilon_{pq}^{-2}$. Since $\FPdim(X)>0$, $X\in\Q(\C)$,
\begin{align*}
\nu\epsilon_1=\nu\epsilon_\frac{p+3}{2}=\cdots=\nu\epsilon_\frac{p+q}{2}=1, \epsilon_2=\cdots=\epsilon_\frac{p+1}{2}=1,
\end{align*}
consequently, we obtain $\FPdim(\C)=pq\varepsilon_{pq}^2$ and
\begin{align*}
\FPdim(X)\in\left\{1,\varepsilon_{pq}^2,d_1',d_2'\right\},
\dim(X)\in\left\{1,\varepsilon_{pq}^{-2},d_1,
-d_2\right\},\forall X\in\Q(\C).
\end{align*}
It is easy to see that other formal codegrees of $\C$ are either $\frac{\dim(\C)}{d_1^2}=p$ or $\frac{\dim(\C)}{d_2^2}=q$, which can't be the Frobenius-Perron dimension of $\C$ since $\C$ is not pointed, hence $\C$ is a Galois conjugate of a pseudo-unitary fusion category. And the modular data of $\C$ is
\begin{align*}
&S=\left(
\begin{array}{cccccccc}
1& \varepsilon_{pq}^{-2} & d_1 & \cdots & d_1
&-d_2&\cdots& -d_2\\
\varepsilon_{pq}^{-2}  &  1 & d_1
& \cdots  & d_1 &d_2&
\cdots&d_2\\
d_1 &d_1  &  &&   \\
\vdots& \vdots &  &2d_1\cos\left(\frac{4\pi j_1k_1}{p}\right)  & &&\textbf{0} \\
d_1&d_1 & & &&  && \\
-d_2&d_2&&&\\
\vdots&\vdots&&\textbf{0}&&&2d_2\cos\left(\frac{4\pi j_2k_2}{q}\right)\\
 -d_2&d_2&&&&&\\
\end{array}
\right),\\
&T=\left(1,1,\zeta_p,\cdots,\zeta_p^{\left(\frac{p-1}{2}\right)^2},
\zeta_q,\cdots,\zeta_q^{\left(\frac{q-1}{2}\right)^2}\right),
\end{align*}
where $1\leq j_1,k_1\leq\frac{p-1}{2}$ and $1\leq j_2,k_2\leq\frac{q-1}{2}$.

If the second row are the categorical dimensions of simple objects, then $e_2$ is the unit object of $\C$ and the elements in the first row are the Frobenius-Perron dimensions of simple objects multiplied by scalar $\nu\epsilon_1\varepsilon_{pq}^{-2}$, similarly,    \begin{align*}
\nu\epsilon_1=-\nu\epsilon_\frac{p+3}{2}=\cdots=-\nu\epsilon_\frac{p+q}{2}=1, \epsilon_2=\cdots=\epsilon_\frac{p+1}{2}=1,
\end{align*}
again we obtain
\begin{align*}
\FPdim(X)\in\left\{1,\varepsilon_{pq}^2,d_1',d_2'\right\},
\dim(X)\in\left\{1,\varepsilon_{pq}^{-2},d_1,
-d_2\right\},\forall X\in\Q(\C).
\end{align*}
Hence $\FPdim(\C)=pq\varepsilon_{pq}^2$. By using the same argument, we see that $\C$ is a Galois conjugate of a pseudo-unitary fusion category.

\textbf{Case (2)}: $\overline\mu_p\mu_q=-1$.  We can assume $a_1=1$ and $a_2$ is a non-square residue modulo $q$, the other case is same. Then $\beta_p=\frac{1}{\sqrt{p}}$ and $\beta_q=\frac{-1}{\sqrt{q}}$ if $p=4k+1$, $\beta_p=\frac{\zeta_4}{\sqrt{p}}$ and $\beta_q=\frac{-\zeta_4}{\sqrt{q}}$ if $p=4k+3$, moreover, $\dim(\C)=pq\varepsilon_{pq}^2$. The first row of $S$ is
\begin{align*}
\left(1,\nu\epsilon_1\varepsilon_{pq}^2, \epsilon_2d_1',\cdots,\epsilon_\frac{p+1}{2}d_1',
 -\nu\epsilon_\frac{p+3}{2}d_2',\cdots, -\nu\epsilon_\frac{p+q}{2}d_2'\right),
\end{align*}
and the second row of $S$ is
\begin{align*}
\left(\nu\epsilon_1\varepsilon_{pq}^2, 1,\epsilon_1\epsilon_2\nu d_1', \cdots,\nu\epsilon_1\epsilon_\frac{p+1}{2}d_1',
 \epsilon_1\epsilon_\frac{p+3}{2}d_2',\cdots, \epsilon_1\epsilon_\frac{p+q}{2}d_2'\right).
\end{align*}
Notice that $\dim(\C)=pq\varepsilon_{pq}^2$ and $\dim(\C)$ has a Galois conjugate $pq\varepsilon_{pq}^{-2}<\dim(\C)$ and that other formal codegrees of $\C$ are either $p$ or $q$, hence $\FPdim(X)=\dim(X)$ for all simple objects $X$ of $\C$.  Without loss of generality, we can take the elements in the  first row to be the Frobenius-Perron dimensions of simple objects of $\C$, then \begin{align*}
-\nu\epsilon_\frac{p+3}{2}=\cdots=-\nu\epsilon_\frac{p+q}{2}=1, \nu\epsilon_1=\epsilon_2=\cdots=\epsilon_\frac{p+1}{2}=1,
\end{align*}
and
$\FPdim(X)\in\left\{1,\varepsilon_{pq}^2,d_1',d_2'\right\},\forall X\in\Q(\C)$.
In addition,  up to isomorphism, we know that $\C$ contains  $\frac{p-1}{2}$ simple objects of Frobenius-Perron dimension $d_1'$ and $\frac{q-1}{2}$ simple objects of Frobenius-Perron dimension $d_2'$, and  a unique simple object $X$ with $\FPdim(X)=\varepsilon_{pq}^2$. Notice that the modular data of $\C$ is
\begin{align*}
&S=\left(
\begin{array}{cccccccc}
1& \varepsilon_{pq}^2 & d_1' & \cdots & d_1'
&d_2'&\cdots& d_2'\\
\varepsilon_{pq}^2  &  1 & d_1'
& \cdots  & d_1' &-d_2'&\cdots&-d_2'\\
d_1' &d_1'  &  &&   \\
\vdots& \vdots &&2d_1'\cos\left(\frac{4\pi  j_1k_1}{p}\right)  &&  &\textbf{0}  \\
d_1'&d_1' & & &&  && \\
d_2'&-d_2'&&&\\
\vdots&\vdots&&\textbf{0}&&&-2d_2'\cos\left(\frac{4\pi a_2 j_2k_2}{q}\right)\\
 d_2'&-d_2'&&&&&\\
\end{array}
\right),\\
&T=\left(1,1,\zeta_p,\cdots,\zeta_p^{\left(\frac{p-1}{2}\right)^2},
\zeta_q^{a_2},\cdots,\zeta_q^{a_2\left(\frac{q-1}{2}\right)^2}\right),
\end{align*}
where $1\leq j_1,k_1\leq\frac{p-1}{2}$ and $1\leq j_2,k_2\leq\frac{q-1}{2}$.
\begin{coro}\label{pseudounitary}
Let $ \C$ be a modular fusion category such that $\rho_\C\cong\rho_1\oplus\rho_2$, then either $\C$ is a Galois conjugate of a pseudo-unitary fusion category or $\dim(Y)=\FPdim(Y)$ for all simple objects $Y$ of $\C$.
\end{coro}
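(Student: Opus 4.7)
The plan is to observe that this corollary is essentially a restatement of the conclusions reached during the explicit case analysis in the proof of Theorem \ref{q-p=4}. That analysis produced a dichotomy $\overline{\mu}_p\mu_q=\pm 1$, and these two alternatives correspond precisely to the two alternatives in the corollary.

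First I would invoke Case (1), where $\overline{\mu}_p\mu_q=1$ and $\dim(\C)=pq\varepsilon_{pq}^{-2}$. From the computations done above, regardless of whether $e_1$ or $e_2$ is chosen as the unit object, one finds $\FPdim(\C)=pq\varepsilon_{pq}^2$. In particular $\dim(\C)\neq\FPdim(\C)$, but the two values are Galois conjugates in the quadratic extension $\FQ(\sqrt{pq})/\FQ$. Since the fusion ring of $\C$ is preserved under Galois action and the global dimension of a Galois conjugate fusion category $\sigma(\C)$ equals $\sigma(\dim(\C))$, applying the nontrivial element $\sigma\in\Gal(\FQ(\sqrt{pq})/\FQ)$ yields a fusion category whose global dimension equals its Frobenius--Perron dimension, i.e., a pseudo-unitary categorification of the same fusion ring. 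Hence $\C$ is a Galois conjugate of a pseudo-unitary fusion category.

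Next I would invoke Case (2), where $\overline{\mu}_p\mu_q=-1$ and $\dim(\C)=pq\varepsilon_{pq}^2$. The case analysis showed that the only Galois conjugate of $\dim(\C)$ is $pq\varepsilon_{pq}^{-2}<\dim(\C)$, while the remaining formal codegrees are $p$ and $q$, neither of which can serve as $\FPdim(\C)$ since $\C$ is not pointed. Therefore $\FPdim(\C)=\dim(\C)$, which forces $\FPdim(X)=\dim(X)$ for every $X\in\Q(\C)$, giving the second alternative of the corollary.

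There is no real obstacle here beyond reading off the two cases; the only point requiring care is verifying that Galois conjugation of fusion categories behaves as claimed on global dimensions, which is standard (the $S$-matrix, and hence $\dim(\C)=\sum\dim(X)^2$, transforms by $\sigma$ entry-wise), so the appeal in Case (1) is legitimate.
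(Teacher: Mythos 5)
Your proposal is correct and follows essentially the same route as the paper: the corollary is just a summary of the dichotomy established in the case analysis following Theorem \ref{q-p=4}, with Case (1) ($\overline{\mu}_p\mu_q=1$, so $\dim(\C)=pq\varepsilon_{pq}^{-2}$ while $\FPdim(\C)=pq\varepsilon_{pq}^{2}$ is its Galois conjugate) giving the pseudo-unitary-up-to-Galois alternative, and Case (2) ($\overline{\mu}_p\mu_q=-1$, so $\dim(\C)=pq\varepsilon_{pq}^{2}$ dominates all other formal codegrees) forcing $\dim(Y)=\FPdim(Y)$. Your added remark justifying that Galois conjugation acts entrywise on the $S$-matrix and hence on $\dim(\C)$ is a reasonable explicit supplement to what the paper leaves implicit.
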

\begin{prop}
Let $ \C$ be a modular fusion category such that $\rho_\C\cong\rho_1\oplus\rho_2$, then $\C$ must be a simple modular fusion category.
\end{prop}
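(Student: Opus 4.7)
The plan is to prove simplicity in two steps: first, show that every fusion subcategory of $\C$ is modular; second, show $\C$ is prime, i.e.\ not equivalent to a Deligne product $\D_1\boxtimes\D_2$ of two non-trivial modular fusion categories. Combined with M\"uger's centralizer theorem, these imply that $\C$ has no proper non-trivial fusion subcategory.

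For the first step, my strategy is to rule out non-trivial symmetric fusion subcategories of $\C$. Any symmetric fusion subcategory $\E\subseteq\C$ is (super-)Tannakian by Deligne's theorem, and therefore the Frobenius-Perron dimension of each simple object of $\E$ is a positive integer. From the calculations preceding this proposition, the Frobenius-Perron dimensions of simple objects of $\C$ lie in $\{1,\varepsilon_{pq}^2,d_1',d_2'\}$; since $p<q$ are distinct odd primes, $pq$ is not a square, and the values $\varepsilon_{pq}^2=(p+q+2\sqrt{pq})/4$, $d_1'=(q+\sqrt{pq})/2$, $d_2'=(p+\sqrt{pq})/2$ are all irrational. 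Hence the only simple of $\C$ with integer Frobenius-Perron dimension is the unit, forcing $\E=\vvec$. For any fusion subcategory $\D\subseteq\C$, the M\"uger center $Z_2(\D)=\D\cap\D'$ is a symmetric fusion subcategory of $\C$, hence trivial, so $\D$ is modular.

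For the second step, I would assume $\C\cong\C_1\boxtimes\C_2$ with both $\C_i$ non-trivial modular and derive a contradiction from the $T$-matrix. Cauchy's theorem \cite[Theorem 3.9]{BNRW} applied to each $\C_i$ says the primes dividing $\text{ord}(T_{\C_i})$ are those dividing $N(\dim(\C_i))$, and non-triviality of $\C_i$ forces $\text{ord}(T_{\C_i})>1$; since $\text{ord}(T_\C)=\text{lcm}(\text{ord}(T_{\C_1}),\text{ord}(T_{\C_2}))=pq$ with $p,q$ distinct primes, a case analysis over the possible pairs $(\text{ord}(T_{\C_1}),\text{ord}(T_{\C_2}))\in\{(p,q),(q,p),(p,pq),(pq,p),(q,pq),(pq,q),(pq,pq)\}$ shows that some simple of $\C_1\boxtimes\C_2$ must have twist a primitive $pq$-th root of unity: either one factor contributes a simple with primitive $p$-th root twist and the other a primitive $q$-th root twist (so their tensor product has primitive $pq$-th root twist), or one factor already contains a simple with primitive $pq$-th root twist. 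However, the explicit $T$-matrix of $\C$ determined earlier has eigenvalues only in $\{1\}\cup\mu_p\cup\mu_q$, which contains no primitive $pq$-th root of unity, giving the desired contradiction.

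Combining the two steps: if $\D\subsetneq\C$ were a proper non-trivial fusion subcategory, then $\D$ is modular by the first step, and the M\"uger centralizer theorem yields $\C\cong\D\boxtimes\D'$ with $\D'$ non-trivial (since $\FPdim(\D)<\FPdim(\C)$), contradicting primality from the second step. The main obstacle will be the sub-cases in step (ii) where some $\text{ord}(T_{\C_i})=pq$: there I need to argue carefully from Cauchy and the fact that the individual orders of $T$-eigenvalues of $\C_i$ must lcm to $pq$ that $\C_i$ contains either a simple with $\theta$-value of order $pq$, or else two simples with $\theta$-values of orders $p$ and $q$ respectively, and in each alternative exhibit a $T$-eigenvalue of $\C$ of order $pq$ that is forbidden by the explicit modular data.
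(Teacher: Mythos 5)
Your proof is correct, and its second half takes a genuinely different route from the paper's. The first step coincides with the paper's (stated there in one clause): a symmetric fusion category is integral, and the only simple object of $\C$ with integral Frobenius--Perron dimension is the unit, so every fusion subcategory of $\C$ has trivial M\"uger center, hence is modular, and M\"uger's centralizer theorem gives $\C\cong\D\boxtimes\D_\C'$ with both factors non-trivial. For the ensuing contradiction the paper argues arithmetically: from $\rank(\C)=p+3=\rank(\D)\rank(\D_\C')$ and the divisibility of $\dim(\D)$, $\dim(\D_\C')$ by $p$ and $q$, it invokes the classification of transitive modular categories (\cite{Sch3},\cite{NgWZ}) and of modular categories of dimension $p$ times a unit \cite[Theorem 3.13]{Yu2} to force $\rank(\D)=\frac{p+3}{2}$ and $\rank(\D_\C')=\frac{q+3}{2}$, contradicting the rank factorization. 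You instead read the contradiction off the $T$-matrix: twists multiply under Deligne products, so any factorization with both factors non-trivial produces an eigenvalue of $T$ of order $pq$, whereas the computed spectrum of $T$ consists only of $1$ and primitive $p$-th and $q$-th roots of unity. Your route is more elementary and self-contained --- it uses only the modular data already computed and the behaviour of twists under $\boxtimes$ --- and it sidesteps the external classification results on which the paper leans (whose case analysis of possible ranks is, as written there, rather terse). One point to tighten: the claim that a non-trivial modular factor $\C_i$ has $\text{ord}(T_{\C_i})>1$ does not follow from Cauchy's theorem alone, since Cauchy only yields that $\dim(\C_i)$ would be an algebraic unit; finish either by citing that the global dimension of a non-trivial fusion category is totally $\geq 1$ and strictly $>1$, hence not a unit \cite{O3}, or, more directly, by noting that $\theta\equiv 1$ together with the balancing axiom gives $S_{X,Y}=\dim(X)\dim(Y)$, so the $S$-matrix of $\C_i$ has rank one and $\C_i$ must be trivial.
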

\begin{proof}On the contrary, assume that  $\C$ contains a non-trivial fusion subcategory $\D$, which must be modular as $\C$ does not contain non-trivial simple objects of integer dimensions, hence $\C\cong\D\boxtimes\D_\C'$ by \cite[Theorem 8.21.4]{EGNO}, where $\D_\C'$ is the centralizer of $\D$ in $\C$. In particular,  \begin{align*}
\rank(\C)=p+3=\rank(\D)\rank(\D_\C').
  \end{align*}
  If $\dim(\D)$  can't be divided by $p$ or $q$, then \cite[Theorem 4.4]{Sch3} says that $\D$ is a non-trivial transitive subcategory in sense of \cite{NgWZ}. Assume $\rank(\D)=\frac{p-1}{2}$ with $p\geq5$, so $\rank(\D_\C')=2+\frac{8}{p-1}$, it is an integer if and only if $p=5$, it is impossible as $9$ is not a prime. Hence both $\dim(\D)$ and $\dim(\D_\C')$ are divided by some primes. Obviously $p$ or $q$ can't divide both $\dim(\D)$ or $\dim(\D_\C')$, we can assume $p\mid\dim(\D)$ and $q\mid\dim(\D_\C')$, then $\dim(\D)=pu_1$ and $\dim(\D_\C')=qu_2$ where $u_j$ are non-trivial algebraic units. Therefore, $\rank(\D)=\frac{p+3}{2}$ and $\dim(\D_\C')=\frac{q+3}{2}$ by \cite[Theorem 3.13]{Yu2}, it is a contradiction.
\end{proof}


Let $\C$ be a braided fusion category.  Recall that a commutative algebra $A$ in $\C$ is said to be a connected \'{e}tale  algebra if the category $\C_A$ of right $A$-modules in $\C$ is semisimple and   $\Hom_\C(I,A)=\FC$  \cite[Definition 3.1]{DMNO}. Let $(M,\mu_M)\in\C_A$, where $\mu_M:M\otimes A\to M$ is the right $A$-module morphism of $M$. Then $M$ is a local (or dyslectic) module if $\mu_M=\mu_M\circ (c_{A,M}c_{M,A})$ \cite{DMNO,KiO}, where $c$ is the braiding of $\C$. The category of local modules over a connected \'{e}tale algebra $A$  is a braided fusion category, which will be denoted by $\C_A^0$ below.
\begin{theo}\label{etaleAlg}
Let $\C$ be a modular fusion category  such that $\rho_\C\cong\rho_1\oplus\rho_2$. If $\C$ contains a non-trivial connected \'{e}tale algebra $A$, then  $\C_A^0$ is a pointed modular fusion category of dimension $p$. In particular,  $\C$ can't be braided equivalent to the Drinfeld center of a fusion category.
\end{theo}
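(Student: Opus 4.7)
The plan is to classify the connected \'{e}tale algebras of $\C$ by combining the ribbon-twist constraint with the dimension identity $\FPdim(\C_A^0)=\FPdim(\C)/\FPdim(A)^2$, then invoke the classification of fusion categories of prime Frobenius-Perron dimension together with the Kirillov-Ostrik theorem that $\C_A^0$ is again modular.

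First I would use the standard fact (cf.~\cite{KiO,DMNO}) that every simple summand of a connected \'{e}tale algebra in a ribbon fusion category has ribbon twist equal to $1$. Reading off the $T$-matrix exhibited in both Case (1) and Case (2), exactly two simples of $\C$ carry twist $1$: the unit $\mathbf{1}$ and the unique simple $X_0$ of Frobenius-Perron dimension $\varepsilon_{pq}^2$. Since connectedness forces multiplicity one for $\mathbf{1}$, any non-trivial such $A$ has the form $A=\mathbf{1}\oplus m\, X_0$ for some integer $m\geq 1$.

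Next I would pin down $m$ via arithmetic in $\FQ(\sqrt{pq})$. Using $q=p+4$ (Theorem~\ref{q-p=4}) one has $\varepsilon_{pq}^2+\varepsilon_{pq}^{-2}=(p+q)/2=p+2$ and $\varepsilon_{pq}^2\cdot\varepsilon_{pq}^{-2}=1$, so the Galois norm over $\FQ$ is
\[
N\big(\FPdim(A)\big)=(1+m\varepsilon_{pq}^2)(1+m\varepsilon_{pq}^{-2})=m^2+(p+2)m+1.
\]
Because $\FPdim(\C_A^0)$ is a positive algebraic integer in $\FQ(\sqrt{pq})$ and $N(\FPdim(\C))=p^2q^2$, its rational norm $p^2q^2/N(\FPdim(A))^2$ must be a positive integer, forcing $(m^2+(p+2)m+1)\mid pq$. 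Equating $m^2+(p+2)m+1$ to a divisor $d\in\{1,p,q,pq\}$ gives the discriminants $(p+2)^2$, $p(p+8)$, $(p+4)^2$, and $5pq$, respectively. For $p$ an odd prime $\gcd(p,p+8)=1$, so $p(p+8)$ is a square only if $p$ itself is a perfect square, impossible; and $5pq$ has three distinct prime factors (the alternatives $p=5$ and $q=5$ force the non-primes $q=9$ and $p=1$), so $5pq$ is not a square either. The only positive integer solution is thus $m=1$, from $d=q$, yielding $A=\mathbf{1}\oplus X_0$, $\FPdim(A)=1+\varepsilon_{pq}^2=d_1'$, and
\[
\FPdim(\C_A^0)=\frac{pq\varepsilon_{pq}^2}{q\varepsilon_{pq}^2}=p.
\]

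By \cite{KiO}, $\C_A^0$ is a modular fusion category; since it has prime Frobenius-Perron dimension $p$, the classification of fusion categories of prime dimension \cite{ENO,EGNO} shows that $\C_A^0$ is pointed, hence $\C_A^0\cong\C(\Z_p,\eta)$ for some non-degenerate quadratic form $\eta$. For the final assertion, any Drinfeld centre $\Y(\D)$ contains the canonical Lagrangian connected \'{e}tale algebra $L$ arising as the image of the unit of $\D$ under the right adjoint of the forgetful functor $\Y(\D)\to\D$, for which $\FPdim(\Y(\D)_L^0)=1$; since we have shown $\FPdim(\C_A^0)=p\geq 3$ for \emph{every} non-trivial connected \'{e}tale algebra $A$ of $\C$, $\C$ cannot be braided equivalent to any such $\Y(\D)$. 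The principal technical hurdle is the uniform arithmetic exclusion of $m\geq 2$, which relies crucially on $p$ and $q=p+4$ being odd primes both distinct from $5$.
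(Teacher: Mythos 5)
Your argument for the main assertion is essentially the paper's: reduce the possible \'{e}tale algebras via the trivial-twist constraint to $A=I\oplus mX_0$, pin down $m=1$ by a norm computation in $\FQ(\sqrt{pq})$, and then combine Kirillov--Ostrik modularity of $\C_A^0$ with the fact that a fusion category of prime Frobenius--Perron dimension is pointed. Your arithmetic is in fact more complete than the paper's: you check all four divisors $1,p,q,pq$ of $pq$ via discriminants (the $d=pq$ case, excluded by $5pq$ not being a square, is glossed over in the paper), whereas the paper disposes of $m\geq 2$ by a quick inequality. One genuine difference is the final claim: the paper deduces that $\C$ is not a Drinfeld center from Witt-group considerations ($\C\boxtimes(\C_A^0)^{\mathrm{rev}}\cong\Y(\C_A)$ makes $\C$ Witt equivalent to the nontrivial class of $\C(\Z_p,\eta)$), while you observe that any center $\Y(\D)$ carries its canonical Lagrangian algebra $L$ with $\FPdim(\Y(\D)_L^0)=1$, contradicting the uniform value $p$ you established for every non-trivial connected \'{e}tale algebra; this is a valid and arguably more elementary route, and it even shows the conclusion without assuming the existence hypothesis. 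The one point you should tighten is the opening ``standard fact'': trivial twist of the simple summands of a connected \'{e}tale algebra is not known for arbitrary ribbon fusion categories --- the reference the paper uses (Schopieray) proves it for pseudo-unitary categories, so you must first invoke the reduction of Corollary \ref{pseudounitary} (replacing $\C$ by a Galois conjugate if necessary, which affects neither the \'{e}tale algebras nor the conclusion) before reading twists off the $T$-matrix.
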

\begin{proof}
As we noticed in Corollary \ref{pseudounitary}, we have $\dim(Y)=\FPdim(Y)$ for all objects $Y$ of $\C$ or $\C$ is a Galois conjugate of a pseudo-unitary fusion category. After replacing $\C$ by its Galois conjugate (if necessary), we know the Frobenius-Perron dimensions of objects coincide with the categorical dimensions of  objects.

Let $A$ be a non-trivial connected \'{e}tale algebra of $\C$.
In a pseudo-unitary fusion category, we know  any connected \'{e}tale algebra have trivial twist \cite[Lemma 2.2.4]{Sch1}. Meanwhile,  the modular fusion category $\C$ contains  only two simple objects $\{I,X\}$ (up to isomorphism)  with trivial twist, and
  the categorical dimension of   $X$ is $\varepsilon_{pq}^2$. Therefore, $A=I\oplus nX$ for some $n\geq1$. Since $\dim(\C)=pq\varepsilon_{pq}^2$ and $\dim(A)=1+n\varepsilon_{pq}^2$, so $\frac{\dim(\C)}{\dim(A)^2}$ is an algebraic integer.
Notice that
\begin{align*}
N\left(\frac{\dim(\C)}{\dim(A)^2}\right)=\frac{p^2q^2}{(n^2+1+n\frac{p+q}{2})^2},
 \end{align*}
 hence $1+n^2+n\frac{p+q}{2}=q$ as $\frac{p+q}{2}>p$. Then $n\leq 1$, otherwise $n\frac{p+q}{2}\geq q$, it is impossible.

 Thus, $A=I\oplus X$,  and \cite[Remark 3.4]{DMNO} states that it is a $\C$-rigid algebra in sense of \cite{KiO}. Then it follows from   \cite[Theorem 4.5]{KiO} that $\C_A^0$ is a  modular fusion category and
\begin{align*}
\dim(\C_A^0)=\frac{\dim(\C)}{\dim(A)^2}=\frac{pq\varepsilon_{pq}^2}{(1+\varepsilon_{pq}^2)^2}=p,
\end{align*}
which must be pointed by \cite[Theorem 5.12]{Sch3}.  Moreover,
 \begin{align*}
\C\boxtimes(\C_A^0)^\text{rev}\cong\Y(\C_A)
\end{align*}
as modular fusion categories \cite[Corollary 3.30]{DMNO}, where $(\C_A^0)^\text{rev}=\C_A^0$ as fusion category but with reverse braiding \cite{EGNO}.
Thus  \cite[Lemma 5.9]{DMNO} says that $\C$ is Witt equivalent to $\C(\Z_p,\eta)$, whose Witt equivalence class is non-trivial, so $\C$ can't be braided tensor equivalent to the Drinfeld center of any spherical fusion category by \cite[Proposition 5.8]{DMNO}.
\end{proof}

\begin{rema}\label{Subalgebra}
As we all know, there is a  conformal embedding $G_{2,3}\subseteq E_{6,1}$ \cite[Appendix]{DMNO}, so modular fusion category $\C(\mathfrak{g}_2,3)$  contains a non-trivial \'{e}tale algebra $A$ such that there is a braided equivalence $\C(\mathfrak{g}_2,3)_A^0\cong\C(\mathfrak{e}_6,1)$, which is braided equivalent to $\C(\Z_3,\eta)$ \cite[Proposition A.4.1]{CMS}. Note $\dim(A)=\frac{7+\sqrt{21}}{2}=1+\varepsilon_{21}^2$, hence $A=I\oplus X$ by Theorem \ref{etaleAlg}.

However, when $p>3$, we don't know whether there always exists an \'{e}tale algebra structure on the object $I\oplus X$ currently. We believe the answer is positive.
\end{rema}
\begin{rema}
Let $\I:\C_A\to \Y(\C_A)$ be the right adjoint functor to the forgetful functor $F:\Y(\C_A)\to\C_A$. Then simple direct summand of $\I(I)$ all have trivial twists by
\cite[Theorem 4.1]{NS}. Let $Z_j$ ($1\leq j\leq \frac{p-1}{2}$) be the simple objects of $\C$ such that $\FPdim(Z_j)=\frac{\sqrt{q}(\sqrt{p}+\sqrt{q})}{2}$, then $\theta_{Z_j}$ are primitive $p$-th roots of unity. Let $g$ be a generator of $\Z_p$. Then \begin{align*}
\theta_{Z_j}^{-1}=\theta_{g^{k_j}}=\theta_{g^{-k_j}}
 \end{align*}for a unique $k_j$ with $1\leq k_j\leq\frac{p-1}{2}$. Hence, up to isomorphism,   $\Y(\C_A)=\C\boxtimes(\C_A^0)^\text{rev}$ has exactly $p+1$ simple objects with trivial twist, which are \begin{align*}
\left\{I\boxtimes I, X\boxtimes I, Z_j\boxtimes g^{k_j},Z_j\boxtimes g^{-k_j}\bigg| 1\leq j\leq\frac{p-1}{2}\right\}.
\end{align*}
Indeed, in the next subsection, we will show that the Grothendieck ring $\text{Gr}(\C_A)$ is commutative (see Theorem \ref{structC_A}), therefore, $\I(I)$ must be multiplicity-free by \cite[Corollary 2.16]{O2}, and these objects are exactly all the direct summands of $\I(I)$.
\end{rema}

\subsection{The structure of fusion category $\C_A$}\label{subsection3.2}

In this subsection, we show that the category $\C_A$ obtained in Theorem \ref{etaleAlg} is a near-group fusion category of type $(\Z_p,p)$.

Let $G$ be a finite group, $\Z_+:=\Z_{\geq0}$ and $n\in\Z_+$. Recall that a fusion ring $R$ with $\Z_+$-basis $\{g|g\in G\}\cup\{X\}$ is called a near-group fusion ring of type $(G,n)$ \cite{Si} if
\begin{align*}
gX=Xg=X,~XX=\sum_{g\in G}g+nX.
\end{align*}
When $n=0$, it is well-known that $R$ is categorifiable if and only if $G$ is an abelian group,  the corresponding fusion categories are called Tambara-Yamagami fusion categories, which is completely classified in \cite{TY}. We denote these fusion categories by $\mathcal{TY}(G,\tau,\mu)$, where $\tau$ is a  non-degenerate bi-character on $G$ and $\mu$ is a square root of $|G|^{-1}$.
\begin{theo}\label{structC_A}$\C_A$ is a near-group fusion category of type $(\Z_p,p)$.
\end{theo}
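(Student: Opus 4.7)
The plan is to show that $\C_A$ has exactly $p+1$ simple objects---the $p$ invertibles coming from $\C_A^0\cong\C(\Z_p,\eta)$, plus a single extra simple $Y$ with $\dim(Y)=(p+\sqrt{pq})/2$---and then to deduce the near-group fusion rule $Y\otimes Y=\sum_{g\in\Z_p}g+pY$. Throughout I set $\omega:=(1+\sqrt{pq})/2$; since $q-p=4$ and both primes are odd one has $pq\equiv1\pmod 4$, so the ring of integers of $\FQ(\sqrt{pq})$ is $\Z[\omega]$. From Theorem~\ref{etaleAlg} we already know $A=I\oplus X$ and $\FPdim(A)=1+\varepsilon_{pq}^2=\sqrt{q}\varepsilon_{pq}$, from which $\FPdim(\C_A)=\FPdim(\C)/\FPdim(A)=p\sqrt{q}\varepsilon_{pq}=p\omega+p(p+3)/2$.

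First I would pin down the invertibles. The pointed subcategory $\C_A^0\hookrightarrow\C_A$ has rank $p$, so $|G(\C_A)|\ge p$. Conversely, $|G(\C_A)|$ divides $\FPdim(\C_A)$ in $\Z[\omega]$, and matching the $\omega$-coefficient of $\FPdim(\C_A)/|G(\C_A)|$ forces $|G(\C_A)|\mid p$; hence $G(\C_A)=\C_A^0\cong\Z_p$. Left tensoring by $\Z_p$ then partitions $\Q(\C_A)$ into orbits of size $1$ or $p$ (as $p$ is prime), with the unit's orbit $O_0$ accounting for all invertibles.

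For the non-invertible simples I would write each dimension as $c+d\omega$ with $c,d\in\Z$; the Galois bound $|\sigma(\dim)|\le\dim$ (for $\sigma\in\Gal(\FQ(\sqrt{pq})/\FQ)$ nontrivial) forces $d\ge0$ and $2c+d\ge0$, and a short divisibility argument in $\Z[\omega]$ excludes $d=0$ for non-invertibles (such a simple would need rational dim $c$ with $c\mid p$, and $c=p$ already exceeds the budget below), so every non-invertible has $d\ge1$. Thus $\dim^2=\alpha+\beta\omega$ with $\alpha:=c^2+d^2(pq-1)/4\ge(pq-1)/4$ and $\beta:=d(2c+d)\ge1$. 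Summing over non-invertible orbits and matching constant and $\omega$-coefficients in
\begin{align*}
\sum_{O\ne O_0}|O|(\alpha_O+\beta_O\omega)=\FPdim(\C_A)-p=\tfrac{p(p+1)}{2}+p\omega,
\end{align*}
a size-$p$ non-invertible orbit would already contribute at least $p(pq-1)/4$ to the constant side, but $p(pq-1)/4\le p(p+1)/2$ reduces via $pq=p^2+4p$ to $(p+3)(p-1)\le0$, which is impossible. The same lower bound rules out two distinct size-$1$ orbits (since $2\cdot(pq-1)/4>p(p+1)/2$ iff $pq>p^2+p+1$, automatic under $q=p+4$). Hence there is a unique non-invertible orbit, of size $1$, with $\beta_Y=p$ and $\alpha_Y=p(p+1)/2$; enumerating the factorizations $(d,2c+d)$ of $p$ leaves only $(d,c)=(1,(p-1)/2)$, so $\dim(Y)=(p+\sqrt{pq})/2$ and $\rank(\C_A)=p+1$.

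The fusion rule then follows easily. Uniqueness of $Y$ implies $Y\cong Y^*$, and for every $g\in\Z_p$ the simple $Y\otimes g$ has dimension $\dim(Y)\ne1$, forcing $Y\otimes g=Y=g\otimes Y$. Writing $Y\otimes Y=\sum_{g}n_g g+mY$, Frobenius reciprocity gives $n_g=\dim\Hom(Y,g\otimes Y)=\dim\Hom(Y,Y)=1$, while comparing Frobenius--Perron dimensions via $\dim(Y)^2=p+p\dim(Y)$ yields $m=p$. Combined with the $\Z_p$-group law inside $\C_A^0$, this is exactly the near-group fusion ring of type $(\Z_p,p)$. The main obstacle is the middle step: the budget $p(p+1)/2$ is just tight enough to force uniqueness of $Y$ precisely because $pq=p^2+4p$ (equivalently $q-p=4$), so without this arithmetic coincidence the orbit enumeration would in principle admit other configurations.
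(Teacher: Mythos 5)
Your strategy is essentially the paper's: compute $\FPdim(\C_A)=\frac{p(q+\sqrt{pq})}{2}$ from Theorem \ref{etaleAlg}, identify the integral/pointed part with $\C_A^0$ of rank $p$, and run an arithmetic count on the Frobenius--Perron dimensions of the remaining simples inside $\FQ(\sqrt{pq})$ to force a single extra simple of dimension $\frac{p+\sqrt{pq}}{2}$, after which the near-group fusion rule is immediate. Your $\Z_p$-orbit analysis and the Frobenius-reciprocity derivation of $n_g=1$ are in fact a bit more explicit than the paper's, and the numerics (the bound $\alpha_O\ge\frac{pq-1}{4}$, the exclusion of a size-$p$ orbit and of two size-$1$ orbits, the factorization $d(2c+d)=p$) all check out and hinge on $q=p+4$ exactly as you say.

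There is, however, a genuine gap at the point where the count begins: you write every simple dimension as $c+d\omega$ with $c,d\in\Z$, i.e.\ you assume $\FPdim(Y)\in\FQ(\sqrt{pq})$ for all $Y\in\Q(\C_A)$. This is not automatic. A simple summand of $Z\otimes A$ can a priori have Frobenius--Perron dimension in a strictly larger real cyclotomic field even though the total $\FPdim(Z\otimes A)=\FPdim(Z)$ lies in $\FQ(\sqrt{pq})$, and your Galois inequality $|\sigma(\dim)|\le\dim$ for the nontrivial $\sigma\in\Gal(\FQ(\sqrt{pq})/\FQ)$ has no content until the field of dimensions is pinned down. The paper closes this with \cite[Proposition 1.8]{GaSch}: $\C_A$ is faithfully graded by $\Gal(\FQ(\FPdim(Y):Y\in\Q(\C_A))/\FQ(\FPdim(\C_A)))$, an elementary abelian $2$-group whose order divides $\FPdim(\C_A)$; since $2$ does not divide $\FPdim(\C_A)$ as an algebraic integer, the group is trivial and the dimension field is exactly $\FQ(\sqrt{pq})$. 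You need this (or an equivalent) step before anything else. A second, smaller soft spot is your exclusion of $d=0$ for non-invertible simples: the parenthetical claim that such a simple would have integer dimension $c$ with $c\mid p$ is unjustified (an individual simple of integer dimension need not lie in the maximal integral subcategory, let alone have dimension dividing $p$). This one is repairable inside your own count: the $\omega$-coefficient budget of $p$ must be carried by orbits with $d\ge 1$, your bounds force there to be exactly one such orbit, of size $1$, with $\beta=p$ and hence $\alpha=\frac{p(p+1)}{2}$, which exhausts the constant budget and leaves no room for an integral orbit contributing $\alpha=c^2\ge 4$; but as written the argument does not do the job.
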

\begin{proof}As we have a braided tensor equivalence  $\C\boxtimes(\C_A^0)^\text{rev}\cong\Y(\C_A)$ by Theorem \ref{etaleAlg},  then $\dim(\C_A)=p\sqrt{q}\varepsilon_{pq}=\frac{p(\sqrt{pq}+q)}{2}$, whose Galois conjugate is $\frac{p(-\sqrt{pq}+q)}{2}$. It was proved that fusion category $\C_A$ is faithfully graded by the following Galois group \begin{align*}
\Gal(\FQ(\FPdim(Y):Y\in\Q(\C_A)))/\FQ(\FPdim(\C_A))),
 \end{align*}
 which  is an elementary abelian $2$-group \cite[Proposition 1.8]{GaSch}, so the order of the Galois group is a factor of $\FPdim(\C_A)$ by \cite[Theorem 3.5.2]{EGNO}. Since $2\nmid\FPdim(\C_A)$,
we see \begin{align*}
\FQ(\FPdim(Y):Y\in\Q(\C_A))=\FQ(\sqrt{pq})= \FQ(\varepsilon_{pq}^2).
\end{align*}
 Notice that  \cite[Proposition 8.15]{ENO} says the ratio
 $\frac{\FPdim(\C_A)}{\FPdim((\C_A)_\text{int})}$ is an algebraic integer, where $(\C_A)_\text{int}$ is the maximal integral fusion subcategory of $\C_A$,
 so the only prime factor of $\FPdim((\C_A)_\text{int})$ is $p$, as $\C_A^0$ is pointed by Theorem \ref{etaleAlg}. Hence, $(\C_A)_\text{int}=\C_A^0$.

Let $Z$  be an arbitrary non-invertible simple object of $\C_A$ such that $\FPdim(Z)=\frac{a+b\sqrt{pq}}{2}$, which is an algebraic integer, where $a,b$ are rational with $b\neq0$. Then the minimal polynomial of $\FPdim(X)$ is \begin{align*}
x^2-(\FPdim(Z)+\sigma(\FPdim(Z))x+\FPdim(Z)\sigma(\FPdim(Z)),
 \end{align*}
 where $\sigma(\sqrt{pq})=-\sqrt{pq}$. Note that $\FPdim(Z)+\sigma(\FPdim(Z))=a\in\FQ$, so $a$ is an integer. And $m:=\FPdim(Z)\sigma(\FPdim(Z))=\frac{a^2-b^2pq}{4}$ is also an integer, then $b^2pq=a^2-4m\in\Z$. Assume $b=\frac{r}{s}$ where $(r,s)=1$, notice that $(pq,s)=1$, otherwise $p$ or $q$ is a factor of $(r,s)$, it is a contradiction. So  $b\in\Z$.

Then $\FPdim(Z)^2=\frac{\frac{a^2+b^2pq}{2}+ab\sqrt{pq}}{2}$, while
\begin{align*}
\FPdim(\C_A)=\frac{p (q+\sqrt{pq})}{2}=\sum_{Y\in\Q(\C_A)}\FPdim(Y)^2\geq \FPdim(\C_A^0)+\FPdim(Z)^2,
\end{align*}
so $\FPdim(Z)^2=\frac{\frac{a^2+b^2pq}{2}+ab\sqrt{pq}}{2}\leq\frac{p(q-2)+p\sqrt{pq}}{2}$, by comparing the rational and irrational parts, we obtain that  $b^2\leq 1$, consequently $b=1$ ($b\neq-1$, otherwise $\FPdim(Z)$ has a Galois conjugate whose absolute value is strictly larger than $\FPdim(X)$, which is impossible \cite[Theorem 3.2.1]{EGNO}). Therefore, up to isomorphism, $\C_A$  has exactly one non-invertible  simple object  $Z$. Since
\begin{align*}
\FPdim(\C_A)=p\sqrt{q}\varepsilon_{pq}=p+\left(\frac{p+\sqrt{pq}}{2}\right)^2,
\end{align*}
 $\FPdim(Z)=\frac{p+\sqrt{pq}}{2}$. By comparing the Frobenius-Perron dimensions of simple objects, we see
\begin{align*}
 Z\otimes Z=\oplus_{g\in\Z_p}g\oplus pZ,
\end{align*}
i.e., $\C$ is  a near-group fusion category of type $(\Z_p,p)$.
\end{proof}

\begin{rema}It worths to note that the categorifications of  near-group fusion rings was characterized with complicate linear and nonlinear equations by using Cuntz algebra, see \cite{Iz} and the reference therein for details. Conclusions from \cite{EG,Iz} suggest that  there may exists an infinite family of near-group fusion categories of type $(G,|G|)$, where $G$ is an abelian group. However, in order to  show there such a near-group fusion category exists, one need to solve these equations, which is a non-trivial task, see \cite[Appendix A]{Iz} for solutions of groups of small orders. With the help of computers, when $|G|\leq 13$, the answer is affirmative \cite[Proposition 6]{EG}, and recently this result is improved for cyclic groups of order less than $31$ in \cite{Bud}.

Moreover,  for an  an arbitrary abelian group $G$ of odd order, let  $\A$ be a near-group fusion category of type $(G,|G|)$,  it was conjectured in \cite[Conjecture 2]{EG} that
\begin{align*}
 \Y(\A)\cong\C\boxtimes\C(G,\eta_1)
 \end{align*}
 as modular fusion category, we refer the readers to \cite[Proposition 7]{EG} \cite[Theorem 6.8]{Iz} for a detailed description of the modular data of $\C$.
\end{rema}
Notice that $\A$ contains a unique non-trivial fusion subcategory $\vvec_{\Z_p}$, so $\I(I)$ contains a unique non-trivial \'{e}tale subalgebra $A$ such that $\Y(\A)_A^0\cong\Y(\vvec_{\Z_p})$ as braided fusion category and $\FPdim(A)=\frac{\dim(\A)}{p}=\sqrt{q}\varepsilon_{pq}$ by \cite[Theorem 4.10]{DMNO}. By comparing the Frobenius-Perron dimensions of simple objects, we know $A=I\oplus X$, see Remark \ref{Subalgebra}.

  It was also conjecture in \cite{EG} that  the modular data of $\Y(\A)$ is determined by metric groups $(G,\eta_1)$ and $(H,\eta_2)$, where $H$ is an abelian group of order $|G|+4$. Indeed, if we require $\alpha=\beta=1$, where $\alpha,\beta$ are parameters  in
\cite[Proposition 7]{EG}, it is easy to see that the modular data $\mathcal{MD}_{G,H}(\eta_1,\eta_2)$ of \cite{EG} is exactly  that of $\C$ in the pseudo-unitary situation. Hence, under the assumption that $\C$ contains a  non-trivial \'{e}tale algebra,  Theorem \ref{etaleAlg} gives a partial positive answer to \cite[Conjecture 2]{EG} and provides solutions to the conjectured modular data  of $\C$, and  our result suggests  that the conjecture  might be true.

Based on conclusions of categorification of near-group fusion rings, we propose the following conjecture, and we believe there is an affirmative answer.
 \begin{conj}Let $p,q,\rho_1,\rho_2$ be the notations as before. Then there exists  a modular fusion category $\C$ such that $\rho_\C\cong\rho_1\oplus\rho_2$ if and only if $q-p=4$.
 \end{conj}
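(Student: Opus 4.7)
The ``only if'' direction is Theorem \ref{q-p=4}. For the ``if'' direction, let $p<q$ be odd primes with $q-p=4$. The plan is to reverse the construction of Subsection \ref{subsection3.2}: first exhibit a near-group fusion category $\A$ of type $(\Z_p,p)$, and then recover $\C$ as the M\"{u}ger centralizer of a pointed fusion subcategory inside $\Y(\A)$.

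The initial step asks for the existence of a near-group fusion category $\A$ of type $(\Z_p,p)$. This is governed by the system of polynomial equations on Cuntz-algebra endomorphisms associated to $\Z_p$-graded near-group rings \cite{Iz}, and solutions are presently known for all cyclic groups of order at most $31$ \cite[Proposition 6]{EG} \cite{Bud}. For the conjecture one needs a solution whenever $p$ and $p+4$ are both prime, which is implied by Izumi's general categorification conjecture but has not been verified uniformly.

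Granted the existence of $\A$, the second step produces $\C$. Following \cite[Proposition 7]{EG} and \cite[Theorem 6.8]{Iz}, the Drinfeld center $\Y(\A)$ is modular of Frobenius-Perron dimension $p^2q\varepsilon_{pq}^2$, and contains a pointed modular subcategory equivalent to $\C(\Z_p,\eta)$ arising from $\vvec_{\Z_p}\subset\A$; its M\"{u}ger centralizer is then a modular fusion category $\C$ of dimension $pq\varepsilon_{pq}^2$ satisfying $\C\boxtimes\C(\Z_p,\eta)\cong\Y(\A)$ by \cite[Theorem 8.21.4]{EGNO}. Reading off the $S$- and $T$-matrices of $\C$ from those of $\Y(\A)$ and matching them against the explicit modular data computed after the proof of Theorem \ref{q-p=4} yields $\rho_\C\cong\rho_1\oplus\rho_2$.

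The main obstacle is the very first step: constructing $\A$ for arbitrary primes $p$ such that $p+4$ is also prime. No uniform construction of solutions to the relevant Cuntz-algebra equations is known, and every presently verified case rests on substantial computer assistance. Once $\A$ is in hand, the remaining steps are formal consequences of \cite[Theorem 4.10]{DMNO}, \cite[Theorem 8.21.4]{EGNO}, and the modular-data calculations carried out in the proof of Theorem \ref{q-p=4}. Accordingly, the conjecture would reduce to Izumi's existence conjecture for near-group categorifications of type $(\Z_p,p)$ restricted to primes $p$ for which $p+4$ is also prime.
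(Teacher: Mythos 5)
This statement is a \emph{conjecture} in the paper; the paper offers no proof of it, only the remark that it is ``based on conclusions of categorification of near-group fusion rings.'' Your ``only if'' direction is indeed exactly Theorem \ref{q-p=4}, and your overall strategy for the ``if'' direction (build $\A$ of type $(\Z_p,p)$, then extract $\C$ from $\Y(\A)$) is the same heuristic the paper has in mind. But your write-up is a reduction, not a proof, and it understates how much is missing beyond the existence of $\A$.

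Concretely, there are two gaps in the ``if'' direction, and you only flag the first. (1) The existence of a near-group fusion category of type $(\Z_p,p)$ for every prime $p$ with $p+4$ prime is open; you say this. (2) Even granting $\A$, the second step is not a ``formal consequence'' of \cite[Theorem 4.10]{DMNO} and \cite[Theorem 8.21.4]{EGNO}. The claim that $\Y(\A)$ contains a pointed \emph{modular} subcategory of dimension $p$ (so that \cite[Theorem 8.21.4]{EGNO} applies to split off a centralizer $\C$), that the complementary factor has modular data matching $\mathcal{MD}_{G,H}(\eta_1,\eta_2)$ with $H\cong\Z_{p+4}$, and that the parameters $\alpha,\beta$ of \cite[Proposition 7]{EG} take the values needed to recover $\rho_1\oplus\rho_2$ --- all of this is precisely \cite[Conjecture 2]{EG}, which the paper only \emph{partially} confirms, and only in the opposite direction: Theorem \ref{etaleAlg} starts from a hypothetical $\C$ realizing $\rho_1\oplus\rho_2$ \emph{and} containing a non-trivial \'{e}tale algebra, and produces $\A$. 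Nothing in the paper (or in the references as used here) shows that an arbitrary categorification $\A$ of the $(\Z_p,p)$ near-group ring has a center of the required shape; the subcategory $\vvec_{\Z_p}\subset\A$ only gives, via \cite[Theorem 4.10]{DMNO}, an \'{e}tale algebra $A$ with $\Y(\A)_A^0\cong\Y(\vvec_{\Z_p})$, which is weaker than a tensor factorization $\Y(\A)\cong\C\boxtimes\C(\Z_p,\eta)$. So your proposed reduction target should be ``Izumi's existence conjecture \emph{plus} the Evans--Gannon conjecture on the modular data of the double,'' not the existence statement alone.
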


\subsection{A faithful $\Z_2$-extension of $\C_A$}
In this subsection, we provide a faithful $\Z_2$-extension $\M$ of the near-group fusion category $\C_A$. In particular, we prove that $\M$ contains simple objects of Frobenius-Perron dimension $\frac{\sqrt{p}+\sqrt{q}}{2}$. In the last, we construct a class of  non-commutative fusion rings that are non-trivial $\Z_2$-extensions of near-group fusion rings of type $(\Z_n,n)$ for all $n\geq1$.

For any odd prime $p$, note that there is a modular fusion category of Frobenius-Perron dimension $4p$, which is braided tensor equivalent to a $\Z_2$-equivariantization of a Tambara-Yamagami fusion category $\mathcal{TY}(\Z_p,\tau,\mu)$ \cite[Proposition 5.1]{GNN}, we refer the readers to \cite{DrGNO,EGNO} for definition and properties of equivariantization and de-equivariantization of fusion categories by finite groups. Moreover, modular data of  $\mathcal{TY}(\Z_p,\tau,\mu)^{\Z_2}$ are given in \cite[Example 5D]{GNN} explicitly. In particular,  $\D$ contains a Tannakian fusion subcategory $\Rep(\Z_2)$ and two simple objects of Frobenius-Perron dimension $\sqrt{p}$.

Let $\rho'$ be a $3$-dimensional irreducible congruence representation of $\SL(2,\Z)$ of level $4$ with
\begin{align*}
\rho'(\hxs)=\mu_p\left(
             \begin{array}{ccc}
               0 & \frac{1}{\sqrt{2}} & \frac{1}{\sqrt{2}} \\
               \frac{1}{\sqrt{2}} & \frac{-1}{2} & \frac{1}{2} \\
               \frac{1}{\sqrt{2}} & \frac{1}{2} & \frac{-1}{2} \\
             \end{array}
           \right), \rho'(\hxt)=\text{diag}(1,\xi_1,-\xi_1),
\end{align*}
where $\beta_p=\mu_p\frac{1}{\sqrt{p}}$, $\xi_1$ is a square root of the central charge $\xi$ (or $-\xi$) of  $\C(\Z_p,\eta)$ \cite{GNN}.
\begin{prop}
Let $p\geq3$ be an odd prime, and let $\rho_1$ be an irreducible representation of dimension $\frac{p+1}{2}$ of $\SL(2,\Z_p)$. If $\rho_\C\cong\rho_1\oplus\rho'$, then   $\C$ is braided equivalent to a $\Z_2$-equivariantization of $\mathcal{TY}(\Z_p,\tau,\mu)$.
\end{prop}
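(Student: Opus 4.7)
The plan is to exhibit a Tannakian subcategory $\Rep(\Z_2)$ inside $\C$, de-equivariantize by it, and recognize the resulting fusion category as a Tambara--Yamagami category $\mathcal{TY}(\Z_p,\tau,\mu)$.

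First I would apply the block decomposition of $\rho_\C(\hxs)$ from \cite[Theorem 3.23]{NRWW} to the direct sum $\rho_\C\cong\rho_1\oplus\rho'$, following the same strategy as in the proof of Theorem \ref{q-p=4}. Since $\rho_1(\hxs)_{1,1}=\beta_p$ but $\rho'(\hxs)_{1,1}=0$, after conjugation by an orthogonal $U$ the mixed top-left $2\times2$ block becomes $\tfrac{1}{2}\bigl(\begin{smallmatrix}\beta_p & \nu\beta_p\\ \nu\beta_p & \beta_p\end{smallmatrix}\bigr)$, whence $|\rho_\C(\hxs)_{I,I}|=\tfrac{1}{2}|\beta_p|=\tfrac{1}{2\sqrt{p}}$ and $\dim(\C)=4p$. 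Because $\dim(\C)\in\Z$ is its own only Galois conjugate, $\FPdim(\C)=4p$ and $\C$ is pseudounitary. Reading off the rest of the first row of $\rho_\C(\hxs)$ exactly as in Theorem \ref{q-p=4} forces $\C$ to contain precisely two invertible simples, $(p-1)/2$ simples of Frobenius-Perron dimension $2$ (one for each eigenvalue $\zeta_p^{a_1k^2}$ of $\rho_1(\hxt)$ with $1\le k\le(p-1)/2$), and two simples of Frobenius-Perron dimension $\sqrt{p}$ (produced by the two entries $\mu_p/\sqrt{2}$ in the first row of $\rho'(\hxs)$).

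Next I would identify the pointed subcategory. The eigenvalues of $\rho_\C(\hxt)=\rho_1(\hxt)\oplus\rho'(\hxt)$ are $\{1\}\cup\{\zeta_p^{a_1k^2}:1\le k\le(p-1)/2\}\cup\{1,\xi_1,-\xi_1\}$; exactly two of them equal $1$. Since the dim-$2$ simples carry the twists $\zeta_p^{a_1k^2}\neq 1$ and the dim-$\sqrt{p}$ simples carry the twists $\pm\xi_1\neq 1$, both trivial twists must attach to the two invertibles. Hence $\C_\text{pt}$ is spanned by $I$ and a second invertible $g$ with $\theta_g=1$, so $\C_\text{pt}\cong\Rep(\Z_2)$ is Tannakian. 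Now set $A=I\oplus g$. Then $\D:=\C_A$ is a $\Z_2$-crossed braided fusion category with $\FPdim(\D)=2p$ and a $\Z_2$-action inherited from $\C$ \cite{DrGNO}. The two invertibles of $\C$ collapse to the unit of $\D$; each dim-$2$ simple $Y$ of $\C$ satisfies $g\otimes Y\cong Y$ (to be checked via the Verlinde formula with the explicit $S$-matrix from step one) so becomes a local $A$-module that splits into two invertibles of $\D$, contributing $p-1$ further invertibles; the two dim-$\sqrt{p}$ simples are interchanged by $-\otimes g$ (otherwise they would produce four simples of $\D$ whose squared dimensions exceed $\FPdim(\D)=2p$) and glue into a single self-dual simple $X_\D$ of dimension $\sqrt{p}$. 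Thus $\D$ has rank $p+1$, and comparing Frobenius-Perron dimensions forces the Tambara--Yamagami rules $X_\D\otimes X_\D=\bigoplus_{h\in\Z_p}h$, $h\otimes X_\D=X_\D\otimes h=X_\D$. By \cite{TY}, $\D\cong\mathcal{TY}(\Z_p,\tau,\mu)$ as fusion category, and since equivariantization is inverse to de-equivariantization, $\C\cong\D^{\Z_2}\cong\mathcal{TY}(\Z_p,\tau,\mu)^{\Z_2}$ as braided fusion categories.

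The main obstacle will be the $g$-action analysis in the de-equivariantization step, specifically verifying that each dim-$2$ simple of $\C$ is fixed by $-\otimes g$ while the two dim-$\sqrt{p}$ simples are exchanged, and that the $\Z_2$-crossed braided structure on $\D$ inherited from $\C$ matches the canonical inversion $\Z_2$-action on $\mathcal{TY}(\Z_p,\tau,\mu)$. Both points should follow from the explicit $S$-matrix of Step 1 together with the faithful $\Z_2$-grading on $\D$, which must put the $\sqrt{p}$-dimensional simple in the non-trivial component.
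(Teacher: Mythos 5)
Your proposal is correct and follows essentially the same route as the paper's (sketched) proof: apply the block decomposition of \cite[Theorem 3.23]{NRWW} to get $\dim(\C)=\FPdim(\C)=4p$ and pseudo-unitarity, locate a Tannakian subcategory $\Rep(\Z_2)$, de-equivariantize, and identify the result as $\mathcal{TY}(\Z_p,\tau,\mu)$. The only cosmetic difference is that the paper detects $\Rep(\Z_2)$ from the sign pattern of the $S$-matrix rows and invokes \cite[Corollary 3.32]{DMNO} to see that the core is pointed of dimension $p$ (so non-integrality forces $\C_{\Z_2}$ to be Tambara--Yamagami), whereas you count simples of $\C_A$ directly; your flagged ``obstacle'' about the $-\otimes g$ action is settled by the balancing equation $\theta_{g\otimes Y}=\theta_g\theta_Y S_{g,Y}/(\dim(g)\dim(Y))$ together with the multiplicity-freeness of the twists, exactly as you anticipate.
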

\begin{proof} (\textit{sketched})
Since $\rho_1$ and $\rho'$ are non-degenerate,  it follows from \cite[Theorem 3.23]{NRWW} (see also Theorem \ref{q-p=4}) that
\begin{align*}
\rho_\C(\hxs)
&=\left(
\begin{array}{cccccccc}
\frac{1}{2}\beta_p & \frac{\nu\epsilon_1}{2}\beta_p & \epsilon_2\beta_p & \cdots & \epsilon_\frac{p+1}{2}\beta_p&-\epsilon_\frac{p+3}{2}\frac{\nu\mu_p}{2}
&-\epsilon_\frac{p+5}{2}\frac{\nu\mu_p}{2}\\
\frac{\nu\epsilon_1}{2}\beta_p &  \frac{1}{2}\beta_p & \epsilon_1\epsilon_2\beta_p\nu
& \cdots  & \epsilon_1\epsilon_\frac{p+1}{2}\beta_p\nu &\epsilon_1\epsilon_\frac{p+3}{2}\frac{\nu\mu_p}{2}&
\epsilon_1\epsilon_\frac{p+5}{2}\frac{\nu\mu_p}{2}\\
\epsilon_2\beta_p &\epsilon_1\epsilon_2\beta_p\nu  &  &&   \\
\vdots& \vdots &  &  &  \\
\epsilon_\frac{p+1}{2}\beta_p&\epsilon_1\epsilon_\frac{p+1}{2}\beta_p\nu  &&  &V_1D_1V_1 & & \textbf{0} \\
\epsilon_\frac{p+1}{2}\nu\beta_p&\epsilon_1\epsilon_\frac{p+1}{2}\nu\beta_p&&&\\
-\epsilon_\frac{p+3}{2}\frac{\nu\mu_p}{2}
&\epsilon_1\epsilon_\frac{p+3}{2}\frac{\nu\mu_p}{2}&&&\textbf{0}
&&V_2D_3V_2\\-\epsilon_\frac{p+5}{2}\frac{\nu\mu_p}{2}&
\epsilon_1\epsilon_\frac{p+5}{2}\frac{\nu\mu_p}{2}&&&&&\\
\end{array}
\right),\\
\rho_\C(\hxt)&=(1,1,\zeta_p^a,\cdots,\zeta_p^{a\left(\frac{p-1}{2}\right)^2},\xi_1,-\xi_1),
\end{align*}
where $\nu,\epsilon_1,\cdots,\epsilon_\frac{p+5}{2}\in\{\pm1\}$, and
 \begin{align*}
V_1=\left(
      \begin{array}{ccc}
        \epsilon_2 &  &  \\
         & \ddots&  \\
         &  & \epsilon_\frac{p+1}{2} \\
      \end{array}
    \right)
,V_2=\left(
 \begin{array}{cc}
  \epsilon_\frac{p+3}{2} &  \\
   & \epsilon_\frac{p+5}{2}\\
   \end{array}
   \right), D_3=\mu_p\left(
    \begin{array}{cc}
     \frac{-1}{2} & \frac{1}{2} \\
     \frac{1}{2} & \frac{-1}{2} \\
     \end{array}
   \right).\end{align*}
  Same as Theorem \ref{q-p=4}, if we  identify the $\Q(\C)$ with the standard basis of vector space, then
  \begin{align*}
  S=\left(
\begin{array}{cccccccc}
1&  \nu\epsilon_1  & 2\epsilon_2 & \cdots & 2\epsilon_\frac{p+1}{2}
&-\epsilon_\frac{p+3}{2}\nu\sqrt{p}&-\epsilon_\frac{p+5}{2}\nu\sqrt{p}\\
\nu\epsilon_1 &  1 & 2\epsilon_1\epsilon_2\nu
& \cdots  & 2\epsilon_1\epsilon_\frac{p+1}{2}\nu &\epsilon_1\epsilon_\frac{p+3}{2}\nu\sqrt{p}&
\epsilon_1\epsilon_\frac{p+5}{2}\nu\sqrt{p}\\
2\epsilon_2 &2\epsilon_1\epsilon_2\nu  &  &&   \\
\vdots& \vdots &  & &4\epsilon_j\epsilon_k\cos\left(\frac{4\pi a jk}{p}\right) && \textbf{0}\\
2\epsilon_\frac{p+1}{2}&2\epsilon_1\epsilon_\frac{p+1}{2}\nu&&&\\
-\epsilon_\frac{p+3}{2}\nu\sqrt{p}&\epsilon_1\epsilon_\frac{p+3}{2}\nu\sqrt{p}&&&
&-\sqrt{p}&\epsilon_\frac{p+3}{2}\epsilon_\frac{p+5}{2}\sqrt{p}\\
-\epsilon_\frac{p+5}{2}\nu\sqrt{p}&\epsilon_1\epsilon_\frac{p+5}{2}\nu\sqrt{p}&&&\textbf{0}&
\epsilon_\frac{p+3}{2}\epsilon_\frac{p+5}{2}\sqrt{p}&-\sqrt{p}\\
\end{array}
\right),
  \end{align*}
we know $\dim(\C)=\FPdim(\C)=4p$, so $\C$ is pseudo-unitary. If the first row are Frobenius-Perron dimensions of simple objects, then \begin{align*}
\epsilon_\frac{p+3}{2}=\epsilon_\frac{p+5}{2}=-1,
\nu=\epsilon_1=\epsilon_2=\cdots=\epsilon_\frac{p+1}{2}=1;
  \end{align*}
  if the second row are Frobenius-Perron dimensions of simple objects, then \begin{align*}
  \nu=\epsilon_1=\epsilon_2=\cdots=\epsilon_\frac{p+3}{2}=\epsilon_\frac{p+5}{2}=1.
  \end{align*}
  From both cases, we know that $\C$ always contains a non-trivial Tannakian fusion subcategory $\Rep(\Z_2)$, hence its core $\C_{\Z_2}^0$ is a pointed modular fusion category of Frobenius-Perron dimension $p$ \cite[Corollary 3.32]{DMNO}. Since $\C$ is not integral,  $\C_{\Z_2}$ must be a Tambara-Yamagami fusion category $\mathcal{TY}(\Z_p,\tau,\mu)$, hence $\C\cong\mathcal{TY}(\Z_p,\tau,\mu)^{\Z_2}$ \cite{DrGNO,GNN}, as desired.
\end{proof}
We note that there exists a modular fusion category $\C$, which is also obtained from $\Z_2$-equivariantization of a Tambara-Yamagami fusion category of dimension $2p$, but $\rho_\C\ncong\rho_1\oplus\rho'$; when $p=5$, see \cite[Theorem 4.15]{NRWW} for details.

\begin{theo}\label{categoryM}
There is a fusion category $\M$ which is a faithful $\Z_2$-extension of $\C_A$ and a non-degenerate fusion category $\D$ such that $\C\boxtimes\D\cong\Y(\M)$; moreover, $\M$ contains exactly $p$ simple objects of Frobenius-Perron dimension $\varepsilon_{pq}$.
\end{theo}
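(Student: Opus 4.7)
My plan is to construct $\M$ as the category of modules over a Lagrangian algebra in $\C\boxtimes\D$, where $\D$ is a suitable modular fusion category of Frobenius--Perron dimension $4p$ supplied by the previous proposition.

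\emph{Choice of $\D$.} I would take $\D:=\mathcal{TY}(\Z_p,\tau,\mu)^{\Z_2}$, choosing $(\tau,\mu)$ so that its pointed modular core $\D_{\Rep(\Z_2)}^0$ (of dimension $p$) is braided equivalent to $\C(\Z_p,\eta)^{\text{rev}}$, where $\C(\Z_p,\eta)\cong\C_A^0$ comes from Theorem \ref{etaleAlg}. By the classification of non-degenerate quadratic forms on $\Z_p$ together with \cite{GNN}, both classes of pointed modular categories of prime dimension $p$ are realized as such cores as $(\tau,\mu)$ vary, so such $\D$ exists. With this choice $\C\boxtimes\D$ is Witt-trivial and a direct computation gives $\FPdim(\C\boxtimes\D)=4\FPdim(\C_A)^2$.

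\emph{Lagrangian algebra and $\M$.} The étale algebra $B:=I\boxtimes\FC[\Z_2]$ in $\C\boxtimes\D$ (coming from the Tannakian subcategory $\Rep(\Z_2)\subseteq\D$) satisfies $(\C\boxtimes\D)_B^0\cong\C\boxtimes\C(\Z_p,\eta)^{\text{rev}}\cong\Y(\C_A)$ by Theorem \ref{etaleAlg}. The canonical Lagrangian algebra $L_0:=\I(I)\in\Y(\C_A)$, with $\FPdim(L_0)=\FPdim(\C_A)$, lifts under the étale-algebra correspondence \cite[Theorem 4.10]{DMNO} to a connected étale algebra $L\subseteq\C\boxtimes\D$ containing $B$, with $\FPdim(L)=\FPdim(B)\cdot\FPdim(L_0)=2\FPdim(\C_A)=\sqrt{\FPdim(\C\boxtimes\D)}$. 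Thus $L$ is Lagrangian, and setting $\M:=(\C\boxtimes\D)_L$, the classical Lagrangian correspondence gives $\Y(\M)\cong\C\boxtimes\D$ as braided fusion categories. The inclusion $B\subseteq L$ induces a faithful $\Z_2$-grading on $\M$ whose trivial component is the category of $L_0$-modules in $\Y(\C_A)$, which is $\C_A$ itself.

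\emph{Counting $\varepsilon_{pq}$-simples.} Since $\M_1\otimes\M_1\subseteq\C_A=\{I,g,\ldots,g^{p-1},Z\}$ with $\FPdim(Z)=\frac{p+\sqrt{pq}}{2}$, each simple $Y\in\M_1$ has $\FPdim(Y)^2=k_Y+l_Y\cdot\frac{p+\sqrt{pq}}{2}$ with integers $k_Y\geq1$ and $l_Y\geq0$. Summing over $\M_1$ and matching rational and irrational parts of $\FPdim(\M_1)=\FPdim(\C_A)=\frac{pq}{2}+\frac{p\sqrt{pq}}{2}$ with $q-p=4$ yields $\sum_Y k_Y=2p$ and $\sum_Y l_Y=p$. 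Combining these with the rank constraint $\rank(\Y(\M))=\rank(\C)\rank(\D)=(p+3)\cdot\frac{p+7}{2}$ and the explicit list of simples of $\C$ and $\D$, I would rule out all partitions other than $p$ copies of $(k_Y,l_Y)=(1,0)$ and $p$ copies of $(k_Y,l_Y)=(1,1)$, yielding exactly $p$ invertible simples and $p$ simples of FPdim $\varepsilon_{pq}$ in $\M_1$.

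The main obstacle is the final step. The linear constraints $\sum k_Y=2p,\sum l_Y=p$ admit many integer-valued partitions in principle, so the argument requires additional input from the global structure of $\Y(\M)=\C\boxtimes\D$ (or equivalently from the specific Lagrangian $L$) to force the claimed partition and rule out hypothetical intermediate dimensions such as $\sqrt{p+\sqrt{pq}}$.
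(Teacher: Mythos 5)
Your construction of $\M$ and of the equivalence $\Y(\M)\cong\C\boxtimes\D$ is essentially the paper's route in different clothing: the paper takes the same $\D=\mathcal{TY}(\Z_p,\tau,\mu)^{\Z_2}$ with $\D_{\Z_2}^0\cong\C(\Z_p,\eta)^{\mathrm{rev}}$, observes $(\C\boxtimes\D)_{\Z_2}^0\cong\C\boxtimes\C(\Z_p,\eta^{-1})\cong\Y(\C_A)$, and invokes \cite[Theorem 1.3]{ENO2} to produce the faithful $\Z_2$-extension; your Lagrangian-algebra packaging of that step is fine. The genuine gap is in the counting, which you yourself flag as incomplete --- and it is worse than incomplete, because the target partition you propose to establish is wrong. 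Taking $p$ copies of $(k_Y,l_Y)=(1,0)$ would put $p$ invertible objects into the nontrivial component $\M_h$, whereas in fact $\M_h$ contains no invertibles: the correct multiset is one simple with $(k_Y,l_Y)=(p,0)$, i.e.\ a simple $M$ with $\FPdim(M)=\sqrt{p}$, together with $p$ simples with $(k_Y,l_Y)=(1,1)$, i.e.\ of dimension $\varepsilon_{pq}$. Both multisets satisfy $\sum_Y k_Y=2p$ and $\sum_Y l_Y=p$, which already shows that your two linear constraints (even combined with a rank count for $\Y(\M)$, which does not translate directly into a rank count for $\M$) cannot decide the question.

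The missing idea is to use the $\sqrt{p}$-object as the main tool rather than trying to exclude it. Since $\D$ has simples of Frobenius--Perron dimension $\sqrt{p}$, so does $\Y(\M)$ and hence $\M$. The paper first shows such an object $M$ must be simple and lie in $\M_h$: its dimension lies in $\FQ(\sqrt{p})$ while every simple of $\M_e=\C_A$ has dimension in $\FQ(\sqrt{pq})$ by \cite[Lemma 1.1]{GaSch}, so a nonzero summand in $\M_e$ would have integer dimension and leave a complement $M_2$ with $\FPdim(M_2)=\sqrt{p}-\FPdim(M_1)$ admitting a Galois conjugate of strictly larger absolute value, contradicting \cite[Theorem 3.2.1]{EGNO}. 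Then $M\otimes M=\oplus_{g\in\Z_p}g$ (it lies in $\M_e$ and must be integral), $M$ is self-dual and, via the maximal weakly integral subcategory, unique. The payoff is that for any other simple $Y\in\Q(\M_h)$ no invertible can occur in $M\otimes Y$, so $M\otimes Y=n_YX$ and hence $\FPdim(Y)=n_Y\varepsilon_{pq}$ --- this is what kills all the ``intermediate'' dimensions $\sqrt{k+l\varepsilon_{pq}^2}$ you worry about at the end. Comparing rational and irrational parts of $\FPdim(Y\otimes Y^*)$ then rules out $\oplus_g g\subseteq Y\otimes Y^*$ and forces $Y\otimes Y^*=I\oplus X$, so $n_Y=1$ and the $\Z_p$-action on these $Y$ is free; the equality $\FPdim(\M_h)=\FPdim(\M_e)=p+p\varepsilon_{pq}^2$ then gives exactly $p$ such objects. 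Without some input of this kind your argument cannot close.
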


\begin{proof}
Indeed, let $\D=\mathcal{TY}(\Z_p,\tau,\mu)^{\Z_2}$ such that \begin{align*}\D_{\Z_2}^0\cong\C(\Z_p,\eta^{-1})\cong\C(\Z_p,\eta)^\text{rev},
 \end{align*}
 where $\eta^{-1}(g):=\eta(g)^{-1}$ for all $g\in\Z_p$. Consequently, we have braided equivalences
 \begin{align*}
(\C\boxtimes\D)_{\Z_2}^0\cong\C\boxtimes\D_{\Z_2}^0\cong\C\boxtimes\C(\Z_p,\eta^{-1})\cong\Y(\C_A),
 \end{align*}
by Theorem \ref{etaleAlg}, so $\C\boxtimes\D\cong\Y(\M)$ with fusion category $\M$ being a faithful $\Z_2$-extension of the near-group fusion category $\C_A$ by \cite[Theorem 1.3]{ENO2}.

Let $\M=\oplus_{h\in\Z_2}\M_h$ with $\M_e=\C_A$.  Since $\Y(\M)$ contains a simple object of Frobenius-Perron dimension $\sqrt{p}$, $\M$ contains an object $M$ of Frobenius-Perron dimension  $\sqrt{p}$. We claim  that $M\in\M_h$. Indeed, assume $M=M_1\oplus \M_2$ with $M_1\in\M_e$ and $M_2\in\M_h$, respectively, then $\FPdim(M_i)\in\FQ(\sqrt{p})$ by \cite[Lemma 1.1]{GaSch}. Meanwhile, $\FPdim(Z)\in\FQ(\sqrt{pq})$ for all simple objects $Z$ of $\C_A$, so $\FPdim(M_1)$ must be an integer and $\FPdim(M_2)=\sqrt{p}-\FPdim(M_1)$.  If $M_1$ is a non-zero object, then $\FPdim(M_1)\geq1$,  which implies $\FPdim(M_2)$ admits a Galois conjugate whose absolute value is strictly larger than $\FPdim(M_2)$, it is impossible by \cite[Theorem 3.2.1]{EGNO}.  Hence, $M=M_2\in\M_h$, as claimed.

Since $\M$ is $\Z_2$-graded, $M\otimes M\in\M_e$. Notice that $M\otimes M$ must be a direct sum of integral simple objects of $\M_e$, so $M\otimes M=\oplus_{g\in\Z_p}g$. Hence, $M$ is simple and self-dual. Let $\B$ and $\M_\text{int}$ be the maximal weakly integral and integral fusion subcategories of  $\M$, respectively, then $\B$ is faithfully graded by an elementary abelian $2$-group $G$ with $\M_\text{int}$ being the trivial component \cite[Proposition 3.5.7]{EGNO}. Therefore,  $\FPdim(\B)=p|G|$ by \cite[Theorem 3.5.2]{EGNO},  and $\FPdim(\B)$ is a factor of $\FPdim(\M)$ \cite[Proposition 8.15]{ENO}, so $G=\Z_2$. In particular, $\M$ has a unique simple object $M$ of Frobenius-Perron dimension $\sqrt{p}$.

Let $Y\in\Q(\M_h)$ be an arbitrary simple object satisfying $Y\ncong M$, then $M\otimes Y\in\M_e$. Obviously, $g$ can't be a direct summand of $M\otimes Y$ for all invertible objects $g$ of $\M_e$. Therefore, there exists a positive integer $n_Y$ such that $M\otimes Y=n_YX$, then
\begin{align*}
\FPdim(Y)=\frac{n_Y\FPdim(X)}{\FPdim(M)}
=\frac{n_Y\sqrt{p}\varepsilon_{pq}}{\sqrt{p}}=n_Y\varepsilon_{pq}.
 \end{align*}Notice that $Y\otimes Y^*\in\M_e$, so $Y\otimes Y^*$ is a direct sum of simple objects of $\M_e$.
If
\begin{align*}
Y\otimes Y^*=\oplus_{g\in\Z_p}g\oplus m_YX
\end{align*}
 for some positive integer $m_Y$, as $q=p+4$, then
 \begin{align*}
&\FPdim(Y)^2=n_Y^2\varepsilon_{pq}^2=\frac{(p+2)n_Y^2+n_Y^2\sqrt{pq}}{2}\\
&=p+m_Y\sqrt{p}\varepsilon_{pq}=\frac{(2+m_Y)p+m_Y\sqrt{pq}}{2}.
\end{align*}
By comparing the rational and irrational parts of the above equation, we obtain $n_Y^2=m_Y$ and $p=n_Y^2$, which is absurd. Therefore, $Y\otimes Y^*=I\oplus m_YX$, then previous argument also implies $m_Y=n_Y=1$. In particular, for any non-trivial invertible  object $g$, we have $g\otimes Y\ncong Y$, hence the $\Z_2$-grading of $\M$ induces a transitive action of $\Z_p$ on $\Q(\M_h)$. Up to isomorphism, $\M_h$ contains at  least $p$ non-isomorphic simple objects $\{Y_j\}_{j=1}^p$ of Frobenius-Perron dimension $\varepsilon_{pq}$ and a unique simple object of Frobenius-Perron dimension $\sqrt{p}$. Then
 \begin{align*}
 \FPdim(\M_e)=\FPdim(\M_h)\geq p\FPdim(Y_j)^2+\FPdim(M)^2 =p\varepsilon_{pq}^2+p=\FPdim(\M_e),
 \end{align*}
 thus $\Q(\M_h)=\{M\}\cup\{Y_j|1\leq j\leq p\}$.
\end{proof}

\begin{coro}\label{frelation}
Let $\M$ be the $\Z_2$-extension of $\C_A$, and let $Y$ be an arbitrary simple object of Frobenius-Perron dimension $\varepsilon_{pq}$. Then  the fusion rules of $\M$ is given by the following relations
\begin{align*}
&X\otimes X=\oplus_{g\in\Z_p}g\oplus pX,g^i\otimes g^j=g^{i+j}, g\otimes X=X\otimes g=X,M\otimes M=\oplus_{g\in\Z_p}g,\\
&g^jY:=g^j\otimes Y=Y\otimes g^{p-j}, M\otimes g^jY=X=g^jY\otimes M,X\otimes M=M\otimes X=\oplus_{j=1}^pg^jY, \\
&X\otimes g^jY=g^jY\otimes X=M\oplus \oplus_{j=1}^p g^jY,~ g^jY\otimes g^kY=g^{j+p-k}\oplus X.
\end{align*}
In particular, non-invertible simple objects  of $\M$ are self-dual.
\end{coro}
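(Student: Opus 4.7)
The plan. The relations internal to $\C_A$---namely $g^i \otimes g^j = g^{i+j}$, $g \otimes X = X \otimes g = X$, and $X \otimes X = \oplus_{g \in \Z_p} g \oplus pX$---are precisely the content of Theorem \ref{structC_A}, while the identity $M \otimes M = \oplus_{g \in \Z_p} g$ was already extracted in the proof of Theorem \ref{categoryM}. The task therefore reduces to pinning down the ``mixed'' products involving $M$, $X$, and the family $g^j Y := g^j \otimes Y$ exhausting $\Q(\M_h) \setminus \{M\}$, and along the way to verifying the right-action formula $g^j Y = Y \otimes g^{p-j}$.

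First, a batch of products is forced by Frobenius-Perron dimensions over $\FQ(\sqrt{p},\sqrt{q})$ together with Frobenius reciprocity. Both $M \otimes g^j Y$ and $g^j Y \otimes M$ lie in $\M_e$ with dimension $\sqrt{p}\,\varepsilon_{pq}$; since this quantity is irrational while the only $\M_e$-simple of irrational dimension is $X$ (itself of dimension $\sqrt{p}\,\varepsilon_{pq}$), each product must equal $X$. Next, $X \otimes M, M \otimes X \in \M_h$ have dimension $p\,\varepsilon_{pq}$; using self-duality of $M$ and the previous step, Frobenius reciprocity gives $\dim \Hom(X \otimes M, g^j Y) = \dim \Hom(X, g^j Y \otimes M) = \dim \Hom(X, X) = 1$ for every $j$, whence $X \otimes M = M \otimes X = \oplus_j g^j Y$. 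A parallel count, combined with the two-sided $\Z_p$-invariance of $X$, yields $X \otimes g^j Y = g^j Y \otimes X = M \oplus \oplus_j g^j Y$.

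The heart of the matter is the rule $g^j Y \otimes g^k Y = g^{j-k} \oplus X$. Writing $Y \otimes g^j = g^{\pi(j)} Y$, the map $\pi$ is a homomorphism $\Z_p \to \Z_p$, i.e.\ multiplication by some $c \in \Z_p$. Since $(g^j Y)^* = Y^* \otimes g^{-j}$ and $Y^* \in \{g^s Y\}$, involutivity of $*$ forces $c^2 \equiv 1 \pmod p$, so $c \in \{\pm 1\}$; moreover $Y \otimes Y^* = I \oplus X$ from the proof of Theorem \ref{categoryM}, combined with the identity $Y \otimes g^s Y = g^{cs}(Y \otimes Y)$, forces $s = 0$ (the degenerate case $c = 0$ is excluded because it would make every $g^j Y$ dual to $Y$). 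Hence $Y = Y^*$ and $Y \otimes Y = I \oplus X$, so the general product takes the form $g^j Y \otimes g^k Y = g^{j+ck}(Y \otimes Y) = g^{j+ck} \oplus X$, reducing the claim to $c = -1$.

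The main obstacle is ruling out $c = 1$, since both signs produce consistent associative fusion rings (commutative for $+1$, non-commutative for $-1$); the sign therefore cannot be decided inside the fusion ring alone. To settle this, I would invoke the explicit construction in the proof of Theorem \ref{categoryM}: the braided equivalence $\Y(\M) \cong \C \boxtimes \mathcal{TY}(\Z_p,\tau,\mu)^{\Z_2}$ gives access to all multiplicities in $\Y(\M)$ via the known modular data of \cite[Example 5D]{GNN}, and the induction/restriction adjunction between $\Y(\M)$ and $\M$ pins down a discriminating multiplicity such as $\dim \Hom(gY \otimes gY, I)$, which must vanish and therefore forces $c = -1$. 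The self-duality statement then follows at once: $X$ and $M$ are self-dual because their squares contain the unit, and $(g^j Y)^* = g^{-cj} Y = g^j Y$.
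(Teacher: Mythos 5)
Your reduction to the sign $c\in\{\pm1\}$ in $Y\otimes g^{j}=g^{cj}Y$, and the dimension-count/Frobenius-reciprocity derivation of the products involving $M$ and $X$, track the paper's proof closely and are fine. You also correctly identify the crux: both signs give consistent fusion rings, so categorical input is needed to exclude $c=+1$. But that decisive step is where your proposal has a genuine gap. You defer it to an unexecuted computation (``I would invoke the explicit construction\dots pins down a discriminating multiplicity''), and the one multiplicity you name is asserted with the wrong value: $\dim\Hom_{\M}(gY\otimes gY,I)$ equals $1$ precisely when $gY$ is self-dual, i.e.\ in the case $c=-1$ (where $gY\otimes gY=g^{1+p-1}\oplus X=I\oplus X$), and vanishes when $c=+1$. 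So ``must vanish, therefore $c=-1$'' is backwards; as written your criterion would select the commutative ring. Beyond the sign slip, you never actually extract the needed multiplicity from the modular data of $\C\boxtimes\mathcal{TY}(\Z_p,\tau,\mu)^{\Z_2}$ via induction/restriction, so the argument is not complete even after correcting the sign.

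The paper closes this gap differently and more cheaply: since $\C$ and $\D$ are both self-dual modular categories, every simple object of $\Y(\M)\cong\C\boxtimes\D$ is self-dual, and the algebra map $\text{Gr}(\Y(\M))\otimes_{\Z}\FQ\to\text{Gr}(\M)\otimes_{\Z}\FQ$ induced by the forgetful functor is surjective by \cite[Lemma 9.3.10]{EGNO}; hence all simple objects of $\M$ are self-dual. In the commutative case one would have $(g^{j}Y)^{*}=g^{-j}Y\ncong g^{j}Y$ for $j\neq 0$, a contradiction, forcing $c=-1$. If you want to salvage your route, replace ``must vanish'' by ``must equal $1$'' and actually verify that $\dim\Hom_{\Y(\M)}(F^{\vee}(gY)\otimes F^{\vee}(gY),\I(I))$-type computations give that value; but the self-duality argument avoids any computation with the modular data.
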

\begin{proof}
Let $Y$ be a simple object of $\M$ of Frobenius-Perron dimension $\varepsilon_{pq}$. As $\Q(\M_h)$ contains $p$ simple objects of same Frobenius-Perron dimension, without loss of generality, we can choose $Y$ to be self-dual, and it follows from Theorem \ref{categoryM} that $Y\otimes Y=I\oplus X$.

Let $g$ be a non-invertible simple object. Then there exists a unique  $1\leq k\leq p-1$ such that $ g\otimes Y\cong Y\otimes g^k$.  Consequently,
\begin{align*}
\FC&=\Hom_\M(g\otimes Y,Y\otimes g^k)\cong\Hom_\M(g,Y\otimes g^k\otimes Y)\\&\cong\Hom_\M(g,Y\otimes Y\otimes g^{k^2})=\Hom_\M(g,g^{k^2}),
\end{align*}
which  means $k^2\equiv1~\text{mod}~ p$, then $k=1,p-1$.

If $g\otimes Y=Y\otimes g$, then $g^jY:=g^j\otimes Y=Y\otimes g^j$ for all $1\leq j\leq p$. As $X\otimes g^j=X$,
\begin{align*}
\Hom_\M(X\otimes g^jY,g^kY)&\cong\Hom_\M(X\otimes Y,g^kY)\\
&\cong \Hom(X,g^kY\otimes Y)\\
&\cong\Hom_\M(X,g^k\oplus X)=\FC
\end{align*}
for all $1\leq j,k\leq p$, we see $\oplus_{k=1}^pg^kY\subseteq X\otimes g^jY$. By computing the Frobenius-Perron dimension of $X\otimes g^jY$ and its simple summands, we obtain
\begin{align*}
X\otimes g^jY=M\oplus \oplus_{k=1}^p g^kY,
\end{align*}  which also implies the following relations
\begin{align*}
M\otimes X=\oplus_{j=1}^pg^jY,~M\otimes g^jY=X.
\end{align*}
Similarly, we have  $M\otimes X=X\otimes X$ and $M\otimes g^jY=g^jY\otimes M$.  Particularly, $\text{Gr}(\M)$ is commutative.
However, it follows from  \cite[Theorem 3.23 (iii)]{NRWW} and \cite{GNN} that both $\C$ and $\D$ are self-dual modular fusion categories. Then  the algebra homomorphism
\begin{align*}
\text{Gr}(\Y(\M))\otimes_\Z\FQ\to\text{Gr}(\M)\otimes_\Z\FQ
 \end{align*}
 is surjective by \cite[Lemma 9.3.10]{EGNO}, so simple objects of $\M$ are self-dual, which is a contradiction. Hence, $\text{Gr}(\M)$ can't be commutative, so $g\otimes Y\cong Y\otimes g^{p-1}$, more generally,  $g^j\otimes Y\cong Y\otimes g^{p-j}$ for all $1\leq j\leq p-1$. Thus, for all $1\leq j,k\leq p-1$, we obtain
\begin{align*}
(g^j\otimes Y)\otimes (g^k\otimes Y)=g^j\otimes g^{p-k}\otimes Y\otimes Y=g^{j+p-k}\oplus X.
\end{align*}
In particular, $g^jY$ is self-dual for all $1\leq j\leq p$. Note that we still have
 \begin{align*}
\Hom_\M(X\otimes g^jY,g^kY)\cong\Hom_\M(X\otimes Y,g^kY)\cong\Hom_\M( X, g^{k+1}\oplus X)=\FC
 \end{align*}
for all $1\leq j,k\leq p$, then the fusion relations can be obtained in the same way.
\end{proof}

\begin{rema}
When $p=3$ and $q=7$, the fusion category $\M$ is exactly the fusion category $\mathcal{V}$ constructed by Ostrik in \cite[Proposition A.6.1]{CMS}.
\end{rema}

It is easy to see that one can construct a   fusion ring that is a $\Z_2$-extension of an arbitrary near-group fusion ring of type $(G,k|G|)$, where $G$ is abelian and $k$ is a nonnegative integer. However, for some non-cyclic abelian groups $G$, the corresponding  near-group fusion rings of type $(G,|G|)$ are  not categorifiable, one can take $G=\Z_2\times\Z_2\times\Z_2$ \cite[Proposition A.1]{Iz}\cite{Sch4}, for example, in these cases it is meaningless to consider the categorification of their extensions.

Hence, in the following definition, we only  list the corresponding fusion ring which contains a  near-group fusion ring of type $(\Z_n,n)$.
\begin{defi}
Let $R_0$ be a near-group fusion ring of type $(\Z_n,n)$ determined by the cyclic group $\Z_n=\langle g\rangle$ and relations
\begin{align*}
g^jg^l=g^{j+l},\quad g^jX=Xg^j=X,\quad XX=\sum_{g\in\Z_n}g+nX.
\end{align*}
Let $R\supseteq R_0$ be a fusion ring with $\Z_+$-basis $\{Y_j,g^j|1\leq j\leq n\}\cup\{M,X\}$ and the following fusion relations
\begin{align*}
MM&=\sum_{j=1}^ng^j,Y_jY_l=g^{j+n-l}+X, g^iY_j=Y_k=Y_jg^{n-i}(\text{where}~i+j\equiv k~\text{mod}~ n),\\
Y_jX&=XY_j=M+\sum_{l=1}^nY_l, MY_j=Y_jM=X, MX=XM=\sum_{j=1}^nY_j.
\end{align*}
\end{defi}
A direct computation shows \begin{align*}
\FPdim(X)=\frac{n+\sqrt{n^2+4n}}{2},~\FPdim(M)=\sqrt{n},~
\FPdim(Y_j)=\frac{\sqrt{n}+\sqrt{n+4}}{2},
\end{align*}
for all $1\leq j\leq n$. Then we obtain
\begin{align*}
\FPdim(R_0)=\frac{n^2+4n+n\sqrt{n^2+4n}}{2}, \FPdim(R)=n^2+4n+n\sqrt{n^2+4n}.
\end{align*}
 Hence \cite[Proposition 3.5.3]{EGNO} says that $R$ is a faithful $\Z_2$-extension of $R_0$.
Also notice that  $R$ contains a fusion ring (generated by $M$) of Frobenius-Perron dimension $2n$,  which is categorified as a Tambara-Yamagami fusion category $\mathcal{TY}(\Z_n,\tau,\mu)$.

In addition, we have the following proposition

 \begin{prop}
 When $n\leq 3$, $R$ is categorifiable. And there exists  a braided fusion category $\C$ such that $\text{Gr}(\C)=R$ if and only if  $n=1$.
  \end{prop}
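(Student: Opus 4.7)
The plan is to prove categorifiability for each $n\in\{1,2,3\}$ by explicit construction, and to handle the braided claim by splitting into the (easy) $n=1$ construction, the (easy) $n=3$ non-commutativity obstruction, and the (harder) $n=2$ structural obstruction.

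For categorifiability, when $n=1$ the ring $R$ is the Grothendieck ring of $\mathrm{Fib}\boxtimes\vvec_{\Z_2}$ under the identification $(I,M,X,Y_1)\leftrightarrow(\mathbf{1}\boxtimes\mathbf{1},\mathbf{1}\boxtimes a,x\boxtimes\mathbf{1},x\boxtimes a)$, where $x$ is the Fibonacci object and $a$ the generator of $\vvec_{\Z_2}$; all fusion relations then reduce to $x\otimes x=\mathbf{1}\oplus x$ and $a^{\otimes 2}=\mathbf{1}$. When $n=3$, the ring coincides with $\text{Gr}(\M)$ for the $\Z_2$-extension $\M$ of $\C_A$ from Theorem \ref{categoryM} at $(p,q)=(3,7)$, equivalently Ostrik's category $\mathcal{V}$ of \cite[Proposition A.6.1]{CMS}; Corollary \ref{frelation} reproduces exactly the defining relations of $R$ at $n=3$. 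When $n=2$, one constructs a categorification as a $\Z_2$-extension of an Izumi near-group fusion category of type $(\Z_2,2)$ \cite{Iz}, using the Etingof--Nikshych--Ostrik theory of graded extensions to check the vanishing of the cohomological obstructions in this small case.

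For the ``if'' direction of the braided claim, the category $\mathrm{Fib}\boxtimes\C(\Z_2,\eta)$ (with $\eta$ any nondegenerate quadratic form on $\Z_2$) is modular, and hence braided, with Grothendieck ring $R$ at $n=1$. For the ``only if'' direction when $n=3$ one invokes Corollary \ref{frelation} directly: there $g\otimes Y_1=Y_2$ while $Y_1\otimes g=Y_0=Y_3\ne Y_2$, so $R$ is non-commutative and cannot be the Grothendieck ring of any braided fusion category.

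The main difficulty is the $n=2$ ``only if'' direction, since for $n=2$ we have $g^{-1}=g$ and so the previous non-commutativity argument collapses and $R$ is genuinely commutative. My plan here is to assume a braided categorification $\C$ of $R$ exists and pass to the braided fusion subcategory $\C_0\subseteq\C$ whose Grothendieck ring is the near-group subring $\langle I,g,X\rangle$ of type $(\Z_2,2)$, of rank three and FP-dimension $6+2\sqrt{3}$. Since $\FPdim(X)=1+\sqrt{3}$ is irrational, $\C_0$ cannot be symmetric, so its M\"uger center $\C_0'$ has FP-dimension $1$ or $2$. In the modular case ($\C_0'$ trivial), the balancing equation for $X\otimes X=I\oplus g\oplus 2X$ together with Vafa's theorem pins the twist $\theta_X$ to a short finite list of roots of unity, and I would eliminate each candidate by showing the resulting $S$-matrix fails integrality of the Verlinde fusion coefficients. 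In the non-modular case $\C_0'=\langle I,g\rangle$ is either $\Rep(\Z_2)$ or $\mathrm{sVec}$, and I would de-equivariantize along it to obtain a fusion (super-)category of FP-dimension $3+\sqrt{3}$ that is ruled out by the classification of fusion categories of small FP-dimension. The hardest step will be this last case analysis, in particular the careful handling of the super-Tannakian branch.
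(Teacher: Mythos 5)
Your treatment of $n=1$ (Fibonacci tensored with a $\Z_2$), of $n=3$ via Ostrik's category $\mathcal{V}$, and of the non-commutativity obstruction for $n\geq 3$ agrees with the paper. Both genuine gaps sit at $n=2$. For categorifiability you only announce that one should build a $\Z_2$-graded extension of the Izumi near-group category of type $(\Z_2,2)$ and ``check the vanishing of the cohomological obstructions''; that computation is the entire difficulty, not a small verification, and nothing is actually constructed. The paper sidesteps extension theory by exhibiting the categorification explicitly as $\C(\ssl_2,10)_A$, where $A$ is the connected \'etale algebra with $\FPdim(A)=3+\sqrt{3}$ coming from the $E_6$ conformal embedding \cite{KiO}: its six simple objects have dimensions $1,1,\sqrt{2},1+\sqrt{3},\frac{\sqrt{2}+\sqrt{6}}{2},\frac{\sqrt{2}+\sqrt{6}}{2}$, the object of dimension $1+\sqrt{3}$ generates the near-group trivial component of a faithful $\Z_2$-grading, and the remaining relations are read off the principal graph. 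To keep your route you would have to carry out the obstruction computation in full, or replace it by an explicit model as the paper does.

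For the braided non-existence at $n=2$, your plan (restrict to the near-group subcategory $\langle I,g,X\rangle$, split on its M\"uger center, and eliminate the modular, Tannakian and super-Tannakian branches separately) is left almost entirely unexecuted --- ``I would eliminate each candidate'', ``I would de-equivariantize'' --- and the modular and super-Tannakian branches need genuine further input (e.g.\ the classification of rank-$3$ modular categories, or an analysis of slightly degenerate categories). You are also restricting to the wrong subcategory: the decisive object is $M$, not $X$. For $n=2$ one has $\FPdim(M)=\sqrt{2}$ and $M\otimes M=I\oplus g$, so any braided categorification $\B$ contains an Ising fusion subcategory $\I$, which is automatically non-degenerate \cite{DrGNO}; M\"uger's factorization then gives $\B\cong\I\boxtimes\D$ with $\D$ of rank $2$ and $\dim(\D)=\frac{12+4\sqrt{3}}{4}=3+\sqrt{3}$, and the Galois conjugate $3-\sqrt{3}$ violates Ostrik's lower bound on Galois conjugates of global dimensions \cite{O3} (alternatively, no rank-$2$ fusion category has this dimension). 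That single factorization replaces your entire three-branch analysis.
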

  \begin{proof}
If $n=1$, then $\FPdim(M)=1$, and it is easy to see that \begin{align*}
   \text{Gr}(\C(\Z_2,\eta)\boxtimes\C(\ssl_2,3)_\text{ad})=R,
    \end{align*}
    where $\C(\ssl_2,3)_\text{ad}$ is the adjoint fusion subcategory of $\C(\ssl_2,3)$ \cite{BK,EGNO}. If $n=3$, then  $R$ is the Grothendieck ring of the fusion category $\mathcal{V}$
    \cite[Proposition A.6.1]{CMS}. When $n\geq3$, $R$ is non-commutative, obviously it can't be categorified as braided fusion category.

If $n=2$, then $\FPdim(R)=12+4\sqrt{3}$. We claim that it can be categorified by $\C(\ssl_2,10)_A$, where $A$ is a non-trivial connected \'{e}tale algebra  and $\FPdim(A)=3+\sqrt{3}$ by \cite[Theorem 6.5]{KiO}. Indeed, a  direct computation shows that the Frobenius-Perron dimensions of simple objects of $\C(\ssl_2,10)_A$ belong to $\{1,\sqrt{2},1+\sqrt{3},\sqrt{2+\sqrt{3}}\}$, and  $\sqrt{2+\sqrt{3}}=\frac{\sqrt{2}+\sqrt{6}}{2}$. Since $\C(\ssl_2,10)_A$ contains a unique simple object $X$ of Frobenius-Perron dimension $1+\sqrt{3}$ and two invertible objects $I,g$,  we obtain
   \begin{align*}
   g\otimes X=X=X\otimes g,\quad X\otimes X=I\oplus g\oplus 2X,
    \end{align*}
i.e., $X$ generates a near-group fusion category $\A$. Since $2\FPdim(\A)=\FPdim(\C(\ssl_2,10)_A)$, $\C(\ssl_2,10)_A$ admits a faithful $\Z_2$-grading with trivial component being $\A$ \cite[Proposition 3.5.3]{EGNO}, then the  rest fusion relations follow from the principal diagram \cite[Theorem 6.5]{KiO}.

However, when $n=2$, we claim that $R$  can't be categorified as braided fusion category even if it is commutative. On the contrary, assume that there is a braided fusion category $\B$   such that $\text{Gr}(\B)=R$. Since $\C$ always contains an Ising category $\I$ as fusion subcategory, which is modular by \cite[Corollary B.12]{DrGNO}, $\B\cong\I\boxtimes\D$ as braided fusion category \cite[Theorem 3.13]{DrGNO}, where $\D$ is a braided fusion subcategory of $\B$ such that   $\dim(\D)=3+\sqrt{3}$ by \cite[Theorem 3.14]{DrGNO}. So there exists a Galois conjugate of  $\D$ whose global dimension is $3-\sqrt{3}$, which contradicts the conclusion of
\cite[Theorem 1.1.2]{O3}.
  \end{proof}

  We end this section by proposing  the following question
\begin{ques}
Assume that there is a near-group fusion category $\A$ such that $\text{Gr}(\A)=R_0$. Is $R$ categorifiable when $n\geq4$?
\end{ques}
Indeed, $R$ is categorifiable when $n$ is odd and $\A$ exists and $\Y(\A)\cong\C(\Z_n,\eta)\boxtimes\C$ by the construction of $\M$ in Theorem \ref{categoryM}.

\section*{Acknowledgements}
The author  thanks V. Ostrik for comments on an early draft. The author is  supported by the National Natural Science Foundation of China (no.12101541), the Natural Science Foundation of Jiangsu Province (no.BK20210785), and the Natural Science Foundation of Jiangsu Higher Institutions of China (no.21KJB110006).

\bigskip\author{Zhiqiang Yu\\ \thanks{Email:\,zhiqyumath@yzu.edu.cn}\\{\small School  of Mathematical Science,  Yangzhou University, Yangzhou 225002, China}}

\end{document}